\newcommand{\etype}[1]{\renewcommand{\labelenumi}{(#1{enumi})}}
\def\eroman{\etype{\roman}}
\def\dispace{\setlength{\itemsep}{2pt}}
\newcommand{\ds}[1]{\ {#1} \ }
\newcommand{\dss}[1]{\quad {#1} \quad }
\def\sm{\setminus}
\def\stc{'}
\def\drc{^{\operatorname{c}}}
\def\brV{\overline{V}}
\def\brW{\overline{W}}
\def\brR{\overline{R}}
\def\brS{\overline{S}}
\def\pSkip{\vskip 1.5mm \noindent}
\def\semiring0{semiring$^{\dagger}$}
\def\one{1}
\def\zero{0}
\def\rone{{\one_R}}
\def\rzero{{\zero_R}}
\def\fzero{{\zero_F}}
\def\vzero{\zero_V}
\def\socd{\operatorname{dsoc}}
\def\Indc{\operatorname{Indc}}
\newtheorem{thm}{Theorem} [section]
\newtheorem*{thm*}{Theorem}
\newtheorem{cor}[thm]{Corollary}
\newtheorem{lem}[thm]{Lemma}
\newtheorem{prop}[thm]{Proposition}
\newtheorem*{claim*} {Claim}
\newtheorem{quest}[thm]{Question}
\newtheorem*{theorem13.5'} {Theorem 13.5$'$}
\newtheorem{acknowledgment*}[thm] {Acknowledgment}
\newtheorem{examples}[thm]{Examples}
    \newtheorem*{remarks*} {Remarks}
 \newtheorem*{remark*}{Remark}
 \newtheorem{defn}[thm]{Definition}
\newtheorem*{notation*} {Notation}
\newtheorem{rem}[thm]{Remark}
\def\N{\mathbb{N}}
\def\Z{\mathbb{Z}}
\def\F{\mathbb{F}}
\def\B{\mathbb{B}}
 \renewcommand{\sectionmark}[1]{}
\begin{document}

\title[Modules lacking zero sums]
{Decompositions of Modules lacking zero sums} 
\author[Z. Izhakian]{Zur Izhakian}
\address{Institute  of Mathematics,
 University of Aberdeen, AB24 3UE,
Aberdeen,  UK.}
    \email{zzur@abdn.ac.uk; zzur@math.biu.ac.il}
\author[M. Knebusch]{Manfred Knebusch}
\address{Department of Mathematics,
NWF-I Mathematik, Universit\"at Regensburg 93040 Regensburg,
Germany} \email{Manfred.Knebusch@mathematik.uni-regensburg.de}
\author[L. Rowen]{Louis Rowen}
 \address{Department of Mathematics,
 Bar-Ilan University, 52900 Ramat-Gan, Israel}
 \email{rowen@math.biu.ac.il}

\thanks{The third author thanks the University of Virginia
mathematics department for its hospitality.}

\subjclass[2010]{Primary   14T05, 16D70, 16Y60 ; Secondary 06F05,
06F25, 13C10, 14N05}
\date{\today}


\keywords{Semiring,  lacking zero sums, direct sum decomposition,
free (semi)module, projective (semi)module, indecomposable,
semidirect complement, upper bound monoid, weak complement.}


\thanks{\noindent \underline{\hskip 3cm } \\ File name: \jobname}


\begin{abstract} A direct sum decomposition theory is developed for direct
summands (and complements) of modules over a semiring $R$, having
the property that $v+w = 0$ implies $v = 0$ and $w = 0$. Although
this never occurs when $R$ is a ring, it  always does  holds for
free modules over the max-plus semiring and related semirings. In
such situations,  the direct complement is unique, and the
decomposition is unique up to refinement. Thus, every finitely
generated projective module is a finite direct sum of summands of
$R$ (assuming the mild assumption that $1$ is a finite sum of
orthogonal primitive idempotents of $R$). Some of the results are
presented more generally for weak complements and semidirect
complements. We conclude by examining the obstruction to the ``upper
bound'' property in this context.
\end{abstract}

\maketitle



\numberwithin{equation}{section}
\section{Introduction}

The motivation of this research is to understand direct sum
decompositions of submodules of free modules  over the max-plus
algebra and related structures in tropical algebra (supertropical
algebra \cite{CC, IzhakianKnebuschRowen2010Linear,IR1}  and
symmetrized algebra \cite{AGG}), as well as in some other settings
in algebra. It turns out that direct sum decompositions are unique
(not just up to isomorphism), and thus
 one can develop a
 theory of direct sum decompositions  analogous to the
theory of the socle in customary abstract algebra, using the  axiom
of ``\textbf{lacking zero sums}'':
$$ v + w = 0 \dss\Rightarrow v=  w = 0$$
(termed ``zerosumfree'' in \cite{golan92}.)
This axiom may
 seem  rather peculiar at first glance, but is easily seen
to hold in tropical mathematics and also over other semirings of
interest, as noted in Examples~\ref{emp:1.6}, especially in real
algebra, such as the positive cone of an ordered field
\cite[p.~18]{BCR} or a partially ordered commutative ring
\cite[p.~32]{Br}. More instances are given in Examples~\ref{emp:1.6}.

After writing the first draft of this paper, we became aware of
\cite{Mac}, in which Macpherson already has proved the uniqueness of
direct sum decompositions of projective modules in the tropical
setting in \cite[Theorem~3.9 and Corollary~4.13]{Mac}, working over
the Boolean semifield~$\mathbb B$. (Then he goes on to prove other
interesting results about projective modules). However, our
hypotheses are different, based solely on this axiom of ``lacking
zero sums,'' which is in the language of elementary logic is a
  quasi-identity (with all its formal implications) and our
main tool (Theorem~\ref{mainthm}) is somewhat stronger than the
decomposition property for projective modules (to be compared with
\cite[Aside~3.7]{Mac}). Also, our main results hold for ``weak
complements,'' which are more general than complements in direct
sums.

 \begin{defn}\label{weakc1} A submodule $T\subset V$ is a \textbf{weak complement} (of a
submodule $W\subset V$)  if $$\text{$T+W  = V$ \quad and \quad $
(w + T) \cap T = \emptyset,$ \  $ \forall \ w \in W \setminus \{
\vzero \} .$}$$
\end{defn}

 Our main
theorems for modules lacking zero sums:

\medskip

\noindent{\bf Theorem~\ref{mainthm}}  Suppose   $V$ has a submodule
$T$ of $V$ which is a weak complement. Then any decomposition of $V$
descends to a decomposition of $T$, in the sense that if  $V = Y +
Z,$ then $T = (T\cap Y) + (T \cap Z).$

\medskip

\noindent{\bf Theorem~\ref{mainthm2}} Suppose    $V = T \oplus W = Y
\oplus Z.$ Then $$V = (T\cap Y) \oplus (T\cap Z) \oplus (W \cap Y)
\oplus (W \cap Z).$$

\medskip

\noindent{\bf Theorem~\ref{mainthm21}} Given two decompositions $V =
T \oplus W = Y \oplus Z$ of  $V$, then
\begin{equation}   T + Y = (T \cap Y) \oplus (T \cap Z)
\oplus (W \cap Y).
\end{equation}

\medskip

\noindent{\bf Theorem~\ref{mainthm3}} Any indecomposable projective
$R$-module $P$ is isomorphic to a direct summand of $R$, and thus
has the form $Re$ for some primitive idempotent $e$ of $R$.

\medskip

For $R$ lacking zero sums, given a module, we start to decompose it,
   and either  we can continue ad
infinitum or the process terminates at an indecomposable summand.
The direct sum of all the indecomposable summands is called the
\textbf{decomposition socle}, denoted $\socd (V)$, and contains
every indecomposable summand of~ $V$, in analogy to the socle (the
sum of the simple submodules) in classical module theory. In fact,
this situation is even tighter than with the classical socle, since
$\socd (V)$ now is written {\it uniquely} as a direct sum of
indecomposables. Furthermore, under certain conditions, e.g., $R =
\N_0,$ the set-theoretic complement of $\socd(V)$, with $\{
\vzero \}$ adjoined,  also is a submodule of $V$ whose intersection
with $\socd(V)$ obviously is $\{ \vzero\}$. (But this is not the
direct complement!)
%
Furthermore we can understand $\socd(R)$ in terms of the idempotents
of $R$. This approach yields a result analogous to the relation of
semisimplicity and the socle in classical ring theory:

\medskip

 \noindent{\bf Theorem~\ref{mainthm4}}
$\socd(R) = R$ iff $R$ has a finite set of  primitive orthogonal idempotents
whose sum is~ $\rone$, iff $R$ is a finite direct sum of
indecomposable projective modules.

\bigskip


We can improve these results by strengthening the lacking zero sum
hypothesis.

\begin{defn}\label{Basdef0} A subset $W$ of a  monoid $(V, +, \vzero)$
is  \textbf{summand absorbing} (abbreviated  ~SA) \textbf{in $V$},
if
\[ \forall \, x, y \in V: \quad x + y \in W \quad \Rightarrow \quad x \in W, y \in W. \]
\end{defn}
An analogous argument yields:
\medskip

 \noindent{\bf Theorem~\ref{2.51}}
Assume that $V = W + T$, where $T$ is SA in $V$, and $W \cap T = \{
\vzero \}$.
\begin{enumerate} \dispace \eroman
\item Then $T$ is the unique weak complement of $W$ in $V$.
\item  If in addition $U$ is a submodule of $V$ with $W + U = V$ and also $U$ is SA in $V$,
then $T \subseteq U$, and $T$ is the unique weak complement of $W
\cap U$ in $U$.
\end{enumerate}

\bigskip

Further along section  \S\ref{weakc} we extend some of these results
to more general decompositions, arising from \textbf{weak
complements} and \textbf{semidirect complements}
(Definition~\ref{semidc}). These would all be the same for modules
over rings, but have subtle distinctions in this setting.

\medskip

 \noindent{\bf Proposition~\ref{sum1}} If $U : = W + S = W \ltimes S$ and $V = U
\ltimes T$, then
\begin{align*}
  S + T &  = S \ltimes T,\\
  W + T & = W \ltimes T, \\
  V = W \ltimes (S \ltimes T) & = (W \ltimes S)
\ltimes T.
\end{align*}
(Here the sign $\ltimes$ denotes  a semidirect decomposition.)
\medskip

\noindent{\bf Proposition~\ref{2.8}} Let $W, S, T$ be submodules of
an $R$-module $V$, and assume that $S$ is a weak complement of $W$
in $U : = W + S$, while $T$ is a semidirect complement of $U$ in
$V$. Then $S + T$ is a weak complement of $W$ in $V$.

\medskip

Finally, one could recall that the tropical situation often involves
the stronger  condition (than lacking zero sums), called
\textbf{upper bound} (ub) that $a + b +c = a$ implies $a+b = a$.
This leads us in \S\ref{ubobs} to utilize Green's partial preorder
 on a semigroup $(V, +)$ by saying that $x \preceq y$ if $x+z = y$ for some $z$ in $V$.
 This yields a
 congruence, the obstruction for a module to be ub, which is studied in terms of a convexity condition, and given in the context of the earlier results
 of this paper.

\subsection{Background}$ $ \pSkip

 We recall that a \textbf{semiring}, denoted
in this paper as a
 $(R,+,\cdot \; , \rzero,\rone)$, is a set $R$
equipped with two binary operations $+$ and~$\cdot \;$, called
addition and multiplication, such that:
\begin{enumerate} \eroman
    \item $(R, +, \rzero)$ is an abelian monoid with identity element $\rzero$; \pSkip
    \item $(R, \cdot \; , \rone )$ is a monoid with identity element
    $\rone$; \pSkip
    \item multiplication distributes over addition. \pSkip
\end{enumerate}
 Modules over
semirings (often called ``semimodules'' in the literature,
cf.~\cite{Dub}) are defined just as modules over rings, except that
now the additive structure is that of a semigroup instead of a
group. (Note that subtraction does not enter into the other axioms
of a module over a ring.) To wit:

\begin{defn}\label{def:module0} Suppose $R$ is a semiring.
A (left) \textbf{$R$-module} $V$  is a monoid $(V,+,\vzero)$ together
with scalar multiplication $R\times V \to V$ satisfying the
following properties for all $r_i \in R$ and $v,w \in V$:
\begin{enumerate} \eroman \dispace
    \item $r(v + w) = rv + rw;$
    \item $(r_1+r_2)v = r_1v + r_2 v;$
    \item $(r_1r_2)v = r_1 (r_2 v);$
    \item $\one_R v =v;$
    \item $\zero_R v = r \zero_V = \zero_V.$
  \end{enumerate}
 \end{defn}

We are concerned with the following property.

\begin{defn}\label{Basdef1}
An additive monoid $(V,+,\vzero)$ \textbf{lacks zero sums} if
$v_1+v_2 = \vzero$ implies $v_1 = v_2 = \vzero,$ for any $v_1, v_2
\in V$.
\end{defn}

 Although this condition
never holds when $V$ is a group, since we could take $v_2 = -v_1$,
it always holds in $R^n$ when $R$ is one of the semirings mentioned
in Examples~\ref{emp:1.6} below. In such situations the condition of
lacking zero sums actually is rather ubiquitous, being a
``quasi-identity'' in the language of elementary logic.

\begin{examples} $ $ \begin{enumerate}

\item[a)] If $(V_i \ds \vert i \in I)$ is a family of $R$-modules which  lack zero
sums, then $V := \prod\limits_{i \in I} V_i$ lacks zero sums.
\item[b)] If an $R$-module $V$  lacks zero sums, then the same holds for every submodule of
$V$. In particular, over any semiring  $R$ lacking zero sums, every
submodule of $R^n$ lacks zero sums.
\item[c)] If  $V$ lacks zero sums, then for any set $S$ the  module $\operatorname{Fun} (S, V)$ of functions from $S$ to $V$,
 lacks zero sums.
                    \end{enumerate}
\end{examples}

Thus, any semiring lacking zero sums supports a wide range of
modules lacking zero sums.  If $V= R$ then lacking zero sums is
precisely the condition of being  an ``antiring'' in the sense of
Tan \cite{Tan} and  Dol\u{z}na-Oblak \cite{Dol}, and we have the
following basic examples:
\begin{examples}\label{emp:1.6} $ $
\begin{itemize} \dispace
\item[a)] Obviously if $R \setminus \{ \rzero\}$ is closed under addition
then $R$ lacks zero sums. This happens for the max-plus algebra, the
supertropical algebra mentioned above, and the more general layered
version \cite{IzhakianKnebuschRowenlay} when the ``sorting set'' is
non-negative.  Other instances of this phenomenon worth explicit
mention:
\begin{enumerate}
\item[1)] The ``boolean semifield'' $\mathbb B = \{ -\infty, 0 \}$
(and thus subalgebras of algebras that are free modules over
$\B$).  This shows that our results pertain to ``$\F_1$-geometry,'' treated in \cite{Mac}.

\item[2)] Rewriting the boolean semifield instead as $\B = \{
0, 1\}$ where $1+1 = 1,$ one can generalize it to $\{0, 1,\dots,q\}$
$L=[1,q]:=\{1,2,\dots,q\}$ the ``truncated \semiring0'' of
\cite[Example~2.14]{IzhakianKnebuschRowenlay}, where $a+b$ is
defined to be the minimum of their numerical sum and $q.$

\item[3)] Function semirings, polynomial semirings, and Laurent polynomial
semirings over these semirings.

\item[4)] If $F$ is a formally real field, i.e. $-1$ is not a sum of squares in $F$, then the subsemiring $R = \Sigma F^2$, consisting of all sums of squares in $F$, lacks zero sums. In fact $R$ is a semifield; the inverse of a sum of squares
    \[ a = x_1^2 + \dots + x_r^2 \quad \mbox{is} \quad a^{-1} = \left( {x_1 \over a} \right) + \dots + \left( {x_r \over a} \right)^2. \]

\item[5)] Let $\Z [t] = \Z [t_1, \dots, t_n ]$ denote the polynomial ring in $n$ variables over $\Z$. We choose a non-constant polynomial $f \in \Z [t]$. Then the smallest subsemiring of $\Z [t]$ containing~$f$, namely
    \[ \N_0 [f] = \N_0 + \N_0 f + \N_0 f^2 + \dots \]
lacks zero sums, since $\N_0 [f]$ is a free $\N_0$-module.
\item[6)] More generally, the set of positive elements of any partially ordered
semiring is a sub-semiring lacking zero sums.

\item[7)] The set of finite dimensional characters over a field of characteristic 0 of any group is a semiring lacking zero sums.

\end{enumerate}

\item[b)] Any abelian monoid $(V, +, \vzero)$ can be viewed as a module over
the semiring $\N_0 : = \N \cup \{ 0 \}$, which lacks zero sums.
\end{itemize}
\end{examples}

\begin{defn}\label{Basdef} Suppose that $T$ is a submodule of a module $V$.
We write $T\stc$ for $ V \setminus T$, the \textbf{set-theoretic
complement} of $T$. On the other hand, we define the \textbf{direct
sum} $ T \oplus W $ in the usual way (as the Cartesian product, with
componentwise
 operations).

  A submodule $W\subset V$ is a \textbf{direct complement} of
$T$ if $T\oplus W = V.$
\end{defn}
%

\section{Direct sum decompositions of modules lacking zero sums}

 {\it  We assume through the end of \S\ref{Projmod}  that the $R$-module $V$
lacks zero sums.}

\medskip

Suppose that $W$ and $T$ are submodules of $V$ with $W + T = V$ and  $W\cap T = \{\vzero\}$. In order for $T$ to be a direct
complement of $W$ we need the stronger condition that $w_1+t_1 =
w_2+t_2$ implies $w_1 = w_2$ and $t_1 = t_2.$

The   notion of weak complement (Definition~\ref{weakc1}) goes half
way.

\begin{rem}\label{weakh}
$T\subset V$ is a  weak complement of   $W,$ iff $w_1+t_1 = t_2$
implies $w_1 = \vzero$. In particular $W\cap T = \{\vzero\}$, seen
by taking $t_1 = \vzero.$
\end{rem}

 We turn to the main computation of this paper.

\begin{lem}\label{easy} If $W$ is a  submodule of $V$ with weak complement $T$, and $a = a' + (w_1 + w_2)
$ for $a, a' \in T$ and $w_i \in W$, then $w_1 = w_2 = \vzero$ and $a =
a'.$
\end{lem}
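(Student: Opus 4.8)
The plan is to collapse the two $W$-summands into a single element and then invoke the characterization of weak complements recorded in Remark~\ref{weakh}. First I would set $w := w_1 + w_2$. Since $W$ is a submodule of $V$, we have $w \in W$, and the hypothesis $a = a' + (w_1 + w_2)$ becomes simply
\[ a = a' + w, \qquad a, a' \in T, \ w \in W. \]

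Next I would apply the weak-complement property. By Remark~\ref{weakh}, $T$ being a weak complement of $W$ means precisely that an equation $w + t_1 = t_2$ with $t_1, t_2 \in T$ forces $w = \vzero$. Applying this with $t_1 = a'$ and $t_2 = a$ gives $w = \vzero$; equivalently, if $w$ lay in $W \setminus \{\vzero\}$ then $a = w + a'$ would be an element of $(w + T) \cap T$, contradicting Definition~\ref{weakc1}. Hence $w_1 + w_2 = \vzero$.

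Finally, since $V$ lacks zero sums (our standing assumption), the relation $w_1 + w_2 = \vzero$ forces $w_1 = w_2 = \vzero$. Substituting back, $a = a' + \vzero = a'$, which completes the argument.

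I do not expect a genuine obstacle here: the lemma is essentially a repackaging of the weak-complement axiom together with the hypothesis that $V$ lacks zero sums. The only point deserving a moment's care is the order of operations — one must first pass from the two separate elements $w_1, w_2$ to their sum $w$, apply the weak-complement property to conclude $w = \vzero$, and only then use the lacking-zero-sums axiom to split $w$ back into $w_1$ and $w_2$. The statement is phrased with two summands rather than one because this is the shape in which it will be applied later, when analyzing an element of $T$ written via two different direct decompositions of $V$.
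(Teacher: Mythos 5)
Your proof is correct and follows essentially the same route as the paper: the paper likewise reads off $w_1+w_2=\vzero$ from the weak-complement property (via Remark~\ref{weakh}) and then splits this sum using the lacking-zero-sums axiom, which applies to $W$ as a submodule of $V$. No issues.
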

 \begin{proof} By hypothesis $w_1 + w_2 = \vzero,$ implying $w_1 = w_2 =
 \vzero$ since $W$ lacks zero sums.
\end{proof}
\begin{thm}\label{mainthm} Suppose   $V$ has a submodule $T$ of $V$ which is a weak complement. Then any
decomposition of $V$ descends to a decomposition of $T$, in the
sense that if  $V = Y + Z,$ then $T = (T\cap Y) + (T \cap Z).$
\end{thm}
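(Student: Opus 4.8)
The plan is to reduce everything to Lemma~\ref{easy}. Recall that "weak complement'' presupposes an ambient submodule: say $T$ is a weak complement of $W\subseteq V$, so that $V = W + T$. The inclusion $(T\cap Y) + (T\cap Z)\subseteq T$ is immediate, since $T$ is a submodule containing both $T\cap Y$ and $T\cap Z$; thus the whole content of the theorem is the reverse inclusion $T\subseteq (T\cap Y)+(T\cap Z)$, which is a lacking-zero-sums refinement of the modular-law identity $T\cap(Y+Z) = (T\cap Y)+(T\cap Z)$.

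To prove that inclusion I would fix an arbitrary $a\in T$ and, using $V = Y+Z$, write $a = y+z$ with $y\in Y$ and $z\in Z$. The key claim is that in fact $y\in T$ and $z\in T$; granting this, $y\in T\cap Y$ and $z\in T\cap Z$, so $a = y+z$ lies in $(T\cap Y)+(T\cap Z)$, as desired. To establish the claim I use $V = W+T$ to write $y = w_1 + t_1$ and $z = w_2 + t_2$ with $w_i\in W$ and $t_i\in T$. Then
$$ a = (t_1 + t_2) + (w_1 + w_2), $$
with both $a$ and $t_1+t_2$ lying in $T$, so Lemma~\ref{easy} applies and forces $w_1 = w_2 = \vzero$. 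Hence $y = t_1\in T$ and $z = t_2\in T$, proving the claim.

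I expect essentially no obstacle here beyond invoking Lemma~\ref{easy} correctly: the one point deserving a moment's care is the bookkeeping that puts the argument into the exact shape Lemma~\ref{easy} requires — namely that after splitting $y$ and $z$ along $W+T$, both $a$ and $a' := t_1+t_2$ land on the $T$-side, which is precisely what the rearrangement $a = (t_1+t_2)+(w_1+w_2)$ provides. Note also that the argument never uses directness of the decomposition $V = Y+Z$; a plain sum suffices, which is why the statement is phrased with $+$ rather than $\oplus$.
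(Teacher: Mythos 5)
Your proof is correct and is essentially identical to the paper's: both fix $a\in T$, split it as $y+z$ along $Y+Z$, further split $y$ and $z$ along $W+T$, rearrange to $a=(t_1+t_2)+(w_1+w_2)$, and invoke Lemma~\ref{easy} to kill $w_1$ and $w_2$. The extra remarks you add (the trivial reverse inclusion, and that directness of $Y+Z$ is never used) are accurate but not needed.
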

 \begin{proof}   Namely, take a submodule $W$ of $V$ having weak complement $T$, and for
 any $a \in T,$ write $a = y+z$ for $y\in Y$ and $z\in Z.$ In turn
 $y = t_1 + w_1$ and $z = t_2 +w_2$ for $t_i \in T$ and $w_i \in W.$
 Hence $a = (t_1 + t_2) + (w_1 + w_2) \in T + (w_1+w_2).$ By Lemma~\ref{easy}, $w_1 = w_2 = \vzero.$
 Thus $y = t_1 \in T\cap Y$ and  $z = t_2 \in T \cap Z$.
\end{proof}

\begin{cor}\label{2.2} Assume that $W$ is a submodule of $V$. Assume furthermore that $T$ is a weak complement of $W$
in $V$ and $U$ is a submodule of $V$ with $W + U = V$. Then $T
\subset U$.
\end{cor}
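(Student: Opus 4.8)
The plan is to deduce Corollary~\ref{2.2} directly from \thmref{mainthm} by applying it to the decomposition $V = U + U'$... but wait, $U$ need not have a complement. Let me reconsider: we only know $W + U = V$, not that $U$ is a summand. So I would instead argue elementwise, mimicking the proof of \thmref{mainthm}.

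First I would take an arbitrary $a \in T$ and use $W + U = V$ to write $a = w + u$ with $w \in W$ and $u \in U$. The goal is to show $a \in U$, so it would suffice to show $w = \vzero$ (then $a = u \in U$, using $\vzero \in U$). To get at $w$, I would then invoke that $T$ is a weak complement of $W$ in $V$, hence $V = T + W$, and decompose $u = t + w'$ with $t \in T$ and $w' \in W$. Substituting gives $a = (w + w') + t$, i.e. an equation in $T$ of the form $a = t + (w + w')$ with $w, w' \in W$. Now \lemref{easy} applies with $a' = t$: it forces $w = w' = \vzero$ and $a = t$. In particular $w = \vzero$, so $a = u \in U$, and since $a \in T$ was arbitrary, $T \subseteq U$.

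The main obstacle — really the only subtlety — is recognizing that one cannot simply quote \thmref{mainthm} as a black box, because that theorem needs a genuine decomposition $V = Y + Z$ (a splitting into two submodules whose sum is $V$), whereas here we have only the single submodule $U$ with $W + U = V$ and no companion submodule. The remedy is to extract the operative lemma, \lemref{easy}, and feed it the equation produced by writing an element of $T$ first via $W + U = V$ and then re-expanding the $U$-part via $T + W = V$. Once the equation $a = t + (w+w')$ is in hand, \lemref{easy} does all the work; the rest is bookkeeping with $\vzero \in U$ and the arbitrariness of $a$. No appeal to lacking zero sums beyond what is already packaged inside \lemref{easy} is needed.
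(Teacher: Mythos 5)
Your elementwise argument is correct: writing $a = w + u$ via $V = W + U$, then $u = t + w'$ via $V = T + W$, gives $a = t + (w + w')$, and Lemma~\ref{easy} forces $w = w' = \vzero$, hence $a = u \in U$. However, the ``obstacle'' you identify is not real. Theorem~\ref{mainthm} requires only that $V = Y + Z$ for two submodules $Y,Z$ --- it is not a direct-sum hypothesis, and it does not ask that $Z$ be a summand --- so the hypothesis $W + U = V$ supplies exactly such a pair: take $Y = W$ and $Z = U$. This is precisely what the paper does: Theorem~\ref{mainthm} yields $T = (T \cap W) + (T \cap U)$, and since $T \cap W = \{\vzero\}$ (Remark~\ref{weakh}), this collapses to $T = T \cap U \subseteq U$. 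Your proof is, in effect, the proof of Theorem~\ref{mainthm} specialized to $Y = W$, $Z = U$ (slightly streamlined, since the $W$-component needs no further decomposition), so nothing is wrong; but the black-box application you rejected is available and is the intended one-line argument.
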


\begin{proof} Taking $Y=W$ and $Z = U$ in Theorem~\ref{mainthm},
implies $T = T\cap U$ since  $T\cap W = \{ \vzero \}.$
\end{proof}

\begin{cor}\label{2.3} Any submodules $W$ of $V$  lacking   zero
sums has at most one weak complement in $V$.
\end{cor}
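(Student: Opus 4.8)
The plan is to deduce Corollary~\ref{2.3} directly from Corollary~\ref{2.2} by a symmetry argument. Suppose $W$ is a submodule of $V$ lacking zero sums, and suppose $T_1$ and $T_2$ are both weak complements of $W$ in $V$. The key observation is that a weak complement is in particular a submodule $U$ with $W + U = V$, so each of $T_1, T_2$ can play the role of the ``$U$'' in Corollary~\ref{2.2}.

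Concretely, I would argue as follows. Since $T_1$ is a weak complement of $W$ in $V$ and $T_2$ is a submodule with $W + T_2 = V$ (this being half of the definition of weak complement, Definition~\ref{weakc1}), Corollary~\ref{2.2} applied with $T = T_1$ and $U = T_2$ yields $T_1 \subseteq T_2$. By the same corollary with the roles of $T_1$ and $T_2$ interchanged --- using now that $T_2$ is a weak complement and $T_1$ is a submodule with $W + T_1 = V$ --- we get $T_2 \subseteq T_1$. Hence $T_1 = T_2$, so the weak complement, when it exists, is unique.

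I do not expect any genuine obstacle here: the statement is essentially an immediate formal consequence of Corollary~\ref{2.2}, and the only thing to check is that a weak complement really does satisfy the mild hypothesis (namely $W + U = V$) needed to invoke that corollary, which is immediate from Definition~\ref{weakc1}. The one point worth a word of care is that Corollary~\ref{2.2} requires $V$ to lack zero sums (the standing assumption of this section) and that its proof implicitly uses that $W$ lacks zero sums via Lemma~\ref{easy}; both are in force here, the latter being exactly the hypothesis on $W$ in the statement. So the proof is just the two-line symmetry argument above.
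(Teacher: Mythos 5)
Your proposal is correct and is essentially identical to the paper's own proof, which likewise applies Corollary~\ref{2.2} (via Theorem~\ref{mainthm}) to get $T_1\subseteq T_2$ and then concludes by symmetry. Your extra remark that a weak complement automatically satisfies the hypothesis $W+U=V$ of Corollary~\ref{2.2} is the right point to check and is immediate from Definition~\ref{weakc1}.
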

\begin{proof} If $T$ and $U$ are weak complements of $W$ in $V$, then
$T \subset U$ by the theorem. Also $U \subset T$ by symmetry, whence
$T = U$.\end{proof}


We leave further results about weak complements to \S\ref{weakc} and
turn more specifically to direct  complements.

Since direct complements are also weak complements, we may state in consequence of
Corollary~\ref{2.3} the following.
\begin{cor}\label{2.6} Given   submodules $T, W,Z$ of $V$, with  $W = W \oplus T= W \oplus Z$, then $T = Z.$
\end{cor}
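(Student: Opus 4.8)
The plan is to reduce this immediately to the uniqueness statement for weak complements already established in Corollary~\ref{2.3}. The first step is to observe that a direct complement is in particular a weak complement: if $V = W \oplus T$, then certainly $W + T = V$, and whenever $w + t_1 = t_2$ with $w \in W$ and $t_i \in T$, comparing components in the direct sum forces $w = \vzero$; by Remark~\ref{weakh} this is exactly the condition for $T$ to be a weak complement of $W$. The same applies to $Z$.

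Next I would note that the standing hypothesis of \S\ref{Projmod} is that $V$ lacks zero sums, and hence so does its submodule $W$ (by Examples~(b) on submodules of modules lacking zero sums). Thus $W$ is a submodule lacking zero sums possessing the two weak complements $T$ and $Z$.

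Finally, I would invoke Corollary~\ref{2.3}, which asserts that such a $W$ has at most one weak complement in $V$; therefore $T = Z$, as claimed. (One could alternatively argue directly via Corollary~\ref{2.2}, applied once with $Y = W$, $Z := T$ and once with the roles of $T$ and $Z$ interchanged, to get $Z \subseteq T$ and $T \subseteq Z$.)

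There is essentially no obstacle here: the only thing to be careful about is making explicit the passage from ``direct complement'' to ``weak complement'' and the fact that $W$ inherits the lacking-zero-sums property, after which the result is an immediate corollary of what precedes it.
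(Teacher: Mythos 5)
Your proof is correct and is essentially the paper's own argument: the paper derives Corollary~\ref{2.6} in one line from Corollary~\ref{2.3} after noting that direct complements are weak complements, exactly as you do (and your parenthetical alternative via Theorem~\ref{mainthm} matches the paper's ``second proof''). The only cosmetic point is that the displayed hypothesis should be read as $V = W \oplus T = W \oplus Z$, which you have implicitly and correctly done.
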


From Theorem~\ref{mainthm} we draw   the following conclusion.

\begin{thm}\label{mainthm2} Suppose    $V = T \oplus W = Y \oplus Z.$
Then $$V = (T\cap Y) \oplus (T\cap Z) \oplus (W \cap Y) \oplus (W
\cap Z).$$
\end{thm}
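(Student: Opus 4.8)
The plan is to apply Theorem~\ref{mainthm} repeatedly, using the fact that in a direct sum $V = T \oplus W$ each of $T$ and $W$ is a weak complement of the other (being a direct complement, hence a weak complement). First I would invoke Theorem~\ref{mainthm} with the weak complement $T$ and the decomposition $V = Y \oplus Z$ to get $T = (T \cap Y) + (T \cap Z)$, and symmetrically $W = (W \cap Y) + (W \cap Z)$. Adding these gives $V = T + W = (T\cap Y) + (T \cap Z) + (W \cap Y) + (W \cap Z)$, so the four submodules span $V$; the remaining work is to upgrade this sum to a \emph{direct} sum.

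The key step is to check that the sum is direct, i.e. that a representative of an element as $a_1 + a_2 + a_3 + a_4$ with $a_1 \in T\cap Y$, $a_2 \in T \cap Z$, $a_3 \in W \cap Y$, $a_4 \in W \cap Z$ is unique. Suppose $a_1 + a_2 + a_3 + a_4 = a_1' + a_2' + a_3' + a_4'$. Collecting the $T$-parts against the $W$-parts: the left side has $T$-component $a_1 + a_2$ and $W$-component $a_3 + a_4$, and similarly on the right; since $V = T \oplus W$ is a direct sum, we get $a_1 + a_2 = a_1' + a_2'$ (an identity in $T$) and $a_3 + a_4 = a_3' + a_4'$ (an identity in $W$). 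Now in $T$ we may regroup by the $Y$- versus $Z$-parts: since $a_1, a_1' \in Y$ and $a_2, a_2' \in Z$, and $V = Y \oplus Z$ is direct, we conclude $a_1 = a_1'$ and $a_2 = a_2'$. The same argument inside $W$ gives $a_3 = a_3'$ and $a_4 = a_4'$. Hence the decomposition is unique, so $V = (T\cap Y) \oplus (T\cap Z) \oplus (W \cap Y) \oplus (W \cap Z)$.

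I expect the main obstacle to be purely bookkeeping: one must be careful that ``directness of $V = T \oplus W$'' really does let one separate $a_1+a_2$ (which lies in $T$) from $a_3+a_4$ (which lies in $W$), and then that ``directness of $V = Y \oplus Z$'' applies to the equation $a_1 + a_2 = a_1' + a_2'$ read inside $V$ with $Y$-parts $a_1, a_1'$ and $Z$-parts $a_2, a_2'$. No use of the lacking-zero-sums hypothesis is needed beyond what is already baked into Theorem~\ref{mainthm} for the spanning half; the directness half is formal. An alternative, slightly slicker route would be to note that Theorem~\ref{mainthm2} is the ``$n=2$'' instance of a general statement that the lattice of direct summands behaves distributively here, but the direct four-term argument above is the most transparent and is all that the statement requires.
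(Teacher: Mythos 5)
Your proposal is correct and follows essentially the same route as the paper: apply Theorem~\ref{mainthm} (with $T$, resp.\ $W$, as a weak complement) to split $T$ and $W$ along the decomposition $V = Y \oplus Z$, then combine. The paper compresses the directness verification into the phrase ``putting these together''; your explicit bookkeeping --- first separating $T$-parts from $W$-parts via directness of $T \oplus W$, then $Y$-parts from $Z$-parts via directness of $Y \oplus Z$ --- is exactly the intended justification.
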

\begin{proof} By Theorem~\ref{mainthm}, $T = (T\cap Y) \oplus (T \cap
Z),$ and, symmetrically, $W = (W\cap Y) \oplus (W \cap Z).$ We get
the assertion by putting these together.
\end{proof}

We note in passing that Theorem~\ref{mainthm} also leads to a second
proof of Corollary~\ref{2.6} as follows:

\begin{proof}[Second proof of Corollary~\ref{2.6}] Applying  Theorem~\ref{mainthm} to $W$ instead of
$T$, we have $$W = (W\cap T) \oplus (W\cap Z) = W\cap Z$$ since
$W\cap T =\vzero.$ Hence $W \subset Z,$ and, by symmetry,
$Z\subset W,$ yielding $Z = W.$
\end{proof}

Thus any direct summand  $T$ of $V$ has a unique direct complement,
which we denote as~$T\drc$. Note this is properly contained in
$T\stc$ whenever $T \ne \{\vzero\}$, $T \neq V,$   since taking $a
\notin \vzero$ in~$T$ and  $b \notin \vzero$ in~ $T\stc$ we have
$a+b \in T\stc \setminus T\drc.$

\begin{cor} \label{domp1} If $T \subset Y$ then $Y\drc \subset T\drc$.\end{cor}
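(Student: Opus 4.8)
If $T \subset Y$ then $Y\drc \subset T\drc$.

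Here $T$ and $Y$ are direct summands of $V$ with $T \subseteq Y$, and $X\drc$ denotes the unique direct complement of a summand $X$ (whose existence and uniqueness follow from Corollary~\ref{2.6}).

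\textbf{Statement to be proved.} If $T \subset Y$ (both direct summands of $V$), then $Y\drc \subset T\drc$.

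The plan is to descend $Y\drc$ along the decomposition $V = T \oplus T\drc$ via \thmref{mainthm}. Note first that, as already observed in the excerpt, every direct complement is in particular a weak complement; hence $Y\drc$ is a weak complement (of $Y$) in $V$, and $V = T\oplus T\drc$ is a decomposition of $V$. \thmref{mainthm} applies to \emph{any} decomposition of $V$, not merely to the one for which $Y\drc$ is a complement, so applying it with the weak complement $Y\drc$ and the decomposition $V = T + T\drc$ yields
$$ Y\drc = (Y\drc \cap T) + (Y\drc \cap T\drc). $$

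Next I would kill the first summand using the containment $T \subseteq Y$: since $V = Y \oplus Y\drc$ gives $Y \cap Y\drc = \{\vzero\}$, we get $Y\drc \cap T \subseteq Y\drc \cap Y = \{\vzero\}$, so $Y\drc \cap T = \{\vzero\}$. Substituting this into the displayed equality leaves $Y\drc = Y\drc \cap T\drc$, and in particular $Y\drc \subseteq T\drc$, which is exactly the assertion.

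There is essentially no obstacle here once the right decomposition is chosen; the only point requiring a moment's attention is that one must descend $Y\drc$ along $T \oplus T\drc$ (rather than along $Y \oplus Y\drc$, which would be vacuous), and then exploit $T \subseteq Y$ to see that $Y\drc$ meets $T$ trivially. One could instead feed the two decompositions $V = T \oplus T\drc = Y \oplus Y\drc$ into \thmref{mainthm2} and isolate the relevant summands, but the one-step descent above is the cleaner route.
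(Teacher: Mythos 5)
Your proof is correct and is essentially the fleshed-out version of the paper's one-line argument (``easily seen by refining the decompositions''): you refine $Y\drc$ along $V = T \oplus T\drc$ via Theorem~\ref{mainthm} and then use $T \subseteq Y$ to kill the summand $Y\drc \cap T$. Nothing further is needed.
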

\begin{proof} Easily seen by refining the decompositions.
\end{proof}

\begin{thm}\label{mainthm21} Given two   decompositions $V = T
\oplus W = Y \oplus Z$ of  $V$, then
\begin{equation}   T + Y = (T \cap Y) \oplus (T \cap Z)
\oplus (W \cap Y).
\end{equation}

\end{thm}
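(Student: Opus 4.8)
The plan is to use Theorem~\ref{mainthm2} as the main engine, since it already tells us that $V$ breaks into the four pieces $T\cap Y$, $T\cap Z$, $W\cap Y$, $W\cap Z$ as an internal direct sum. The task then reduces to identifying $T+Y$ with the sub-sum of three of those four summands.

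First I would invoke Theorem~\ref{mainthm2} to write
\[
V = (T\cap Y) \oplus (T\cap Z) \oplus (W\cap Y) \oplus (W\cap Z),
\]
and set $S := (T\cap Y) \oplus (T\cap Z) \oplus (W\cap Y)$, which is a submodule of $V$ contained in the direct sum. The claim is exactly $T+Y = S$. The inclusion $S \subseteq T+Y$ is immediate: $T\cap Y$ and $T\cap Z$ lie in $T$, and $W\cap Y$ lies in $Y$, so all three generators sit inside $T+Y$, hence so does their sum. For the reverse inclusion, take $t \in T$ and $y \in Y$; by Theorem~\ref{mainthm} applied to the decomposition $V = Y\oplus Z$ (with $T$ the weak complement on the other side), we have $T = (T\cap Y)\oplus(T\cap Z)$, so $t = t_Y + t_Z$ with $t_Y \in T\cap Y$, $t_Z \in T\cap Z$; thus $t \in S$. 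Similarly, applying Theorem~\ref{mainthm} with the roles arranged so that $Y = (Y\cap T)\oplus(Y\cap W)$, write $y = y_T + y_W$ with $y_T \in Y\cap T \subseteq S$ and $y_W \in Y\cap W \subseteq S$; thus $y \in S$. Hence $t + y \in S$, giving $T+Y \subseteq S$.

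Finally, because $S$ is a sub-sum of an internal direct sum, the three-fold sum defining $S$ is automatically direct, so $T+Y = (T\cap Y)\oplus(T\cap Z)\oplus(W\cap Y)$ as claimed.

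I do not expect a serious obstacle here; the only point requiring a little care is making sure the two applications of Theorem~\ref{mainthm} are legitimate — that is, that both $T$ and $Y$ genuinely arise as weak complements (which they do, being direct complements of $W$ and $Z$ respectively, and direct complements are weak complements as noted after Corollary~\ref{2.3}). One should also double-check that "sub-sum of a direct sum is direct" is being used correctly: if $V = \bigoplus_i V_i$ internally and $J$ is a subset of the index set, then $\sum_{i\in J} V_i$ is again an internal direct sum, which is clear since any relation among the $V_i$ for $i \in J$ is a relation in $V$. That is the entire argument.
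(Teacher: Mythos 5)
Your proof is correct and follows essentially the same route as the paper: both arguments reduce to the four-fold decomposition, observe that $(T\cap Y)\oplus(T\cap Z)\oplus(W\cap Y)$ is obviously contained in $T+Y$, and then use Theorem~\ref{mainthm} to show $T$ and $Y$ each lie in that submodule, so their sum does too. The paper merely phrases the target submodule as $\pi^{-1}(\vzero)$ for the projection $\pi$ onto $W\cap Z$, which is the same set you call $S$.
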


\begin{proof} Write $V = (T \cap Y) \oplus (T \cap Z)
\oplus (W \cap Y)\oplus (W \cap Z),$ and let $\pi$ be the
  projection of $V$ onto  $W \cap Z$ sending $(T \cap Y) \oplus (T
\cap Z) \oplus (W \cap Y)$ to $\vzero$.   Clearly $\pi^{-1} (\vzero)
\subset T + Y$. On the other hand, Theorem~\ref{mainthm} applied
to the decomposition of $T$
 tells us that $T \subset \pi^{-1} (\vzero)$. By symmetry also $Y
\subset \pi^{-1} (\vzero)$, and thus $T + Y \subset \pi^{-1}
(\vzero)$. This proves that
\[ T + Y = \pi^{-1} (\vzero) = (T \cap Y) \oplus (T \cap Z) \oplus (W \cap Y). \]

\end{proof}

\begin{cor}\label{lacki}

If $T$ and $Y$ are direct summands of an $R$-module $V$ lacking zero
sums, then both $T \cap Y$ and $T + Y$ are direct summands of $V$
and
\begin{align}
   (T \cap Y)\drc & = (T \cap Y\drc) \oplus (T\drc \cap
Y) \oplus (T\drc \cap Y\drc),
\\    (T + Y)\drc & = T\drc \cap Y\drc. \label{eq:2.3.3} \end{align} \end{cor}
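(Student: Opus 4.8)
The plan is to read off both assertions directly from the refined four-term decomposition already established. First I would apply Theorem~\ref{mainthm2} with $W = T\drc$ and $Z = Y\drc$, obtaining
\[ V = (T\cap Y) \oplus (T\cap Y\drc) \oplus (T\drc \cap Y) \oplus (T\drc \cap Y\drc). \]
Since a direct sum decomposition stays direct under any regrouping of its summands, the right-hand side may be read as $(T\cap Y) \oplus S$, where $S := (T\cap Y\drc) \oplus (T\drc \cap Y) \oplus (T\drc \cap Y\drc)$. Thus $T\cap Y$ is a direct summand of $V$ with $S$ as a direct complement, and by the uniqueness of the direct complement (Corollary~\ref{2.6}) we get $(T\cap Y)\drc = S$, which is the first displayed equality.

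For the second equality I would invoke Theorem~\ref{mainthm21}, which identifies $T+Y$ with three of the four summands above, namely $T + Y = (T\cap Y) \oplus (T\cap Y\drc) \oplus (T\drc \cap Y)$ after rewriting $W, Z$ as $T\drc, Y\drc$. Substituting this back into the four-term decomposition yields $V = (T+Y) \oplus (T\drc \cap Y\drc)$, so $T+Y$ is a direct summand and $T\drc \cap Y\drc$ is a direct complement; applying Corollary~\ref{2.6} once more upgrades this to $(T+Y)\drc = T\drc \cap Y\drc$, as claimed in \eqref{eq:2.3.3}.

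The only point needing a word of care is the bookkeeping that the regrouped sums are genuinely internal direct sums and genuinely equal to the submodules $T\cap Y$ and $T+Y$ — but both are immediate: directness of any subcollection of summands is inherited from the four-fold direct sum, the identity $(T\cap Y)\oplus S = V$ is exactly Theorem~\ref{mainthm2}, and the identity $T+Y = (T\cap Y) \oplus (T\cap Y\drc) \oplus (T\drc \cap Y)$ is exactly Theorem~\ref{mainthm21}. Hence there is no real obstacle; the corollary is a repackaging of Theorems~\ref{mainthm2} and~\ref{mainthm21} together with the uniqueness of direct complements.
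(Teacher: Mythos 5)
Your proof is correct and is exactly the argument the paper intends: the corollary is stated without proof precisely because it follows by regrouping the four-fold decomposition of Theorem~\ref{mainthm2}, identifying $T+Y$ via Theorem~\ref{mainthm21}, and invoking the uniqueness of direct complements (Corollary~\ref{2.6}). Nothing is missing.
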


\begin{prop}\label{complem} Assume that $(U_i \ds \vert i \in I)$ is a
finite family of direct summands of an $R$-module $V$ lacks zero
sums, with decompositions $V = U_i \oplus  U_i\drc$. For any $J
\subset I$ define $U_J : = \bigcap\limits_{j \in J} U_j$ and  $U_J^* : = \bigcap\limits_{j \in J} U_j\drc$. Then

\begin{equation*}  V = \bigoplus\limits_{J \subset I} \big(U_J \cap {U_{I
\setminus J}^*}\big). \end{equation*}

 \end{prop}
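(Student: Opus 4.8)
The plan is to induct on $n := |I|$, at each stage refining the decomposition obtained so far by one further pair $U_k, U_k\drc$ by means of \thmref{mainthm}. For $n = 0$ the only subset $J$ is $\emptyset$, and with the convention that an empty intersection equals $V$ we have $U_\emptyset = U_\emptyset^* = V$, so the asserted identity reduces to $V = V$. (One could equally begin the induction at $n = 1$, where the identity is the given decomposition $V = U_1 \oplus U_1\drc$ rewritten using $U_\emptyset = U_\emptyset^* = V$.)

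For the inductive step I would write $I' = I \cup \{k\}$ with $k \notin I$ and $|I| = n$, and assume $V = \bigoplus_{J \subseteq I}(U_J \cap U_{I \setminus J}^*)$. Fix $J \subseteq I$ and set $W_J := U_J \cap U_{I \setminus J}^*$. As one of the summands of this decomposition, $W_J$ is a direct summand of $V$, hence — direct complements being weak complements — a weak complement of the complementary summand, so \thmref{mainthm} applies to $W_J$ together with the decomposition $V = U_k \oplus U_k\drc$ and gives $W_J = (W_J \cap U_k) + (W_J \cap U_k\drc)$. Since $W_J \cap U_k \subseteq U_k$ and $W_J \cap U_k\drc \subseteq U_k\drc$ lie in complementary summands of $V$, this sum is direct:
\[ W_J = (W_J \cap U_k) \oplus (W_J \cap U_k\drc). \]

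Next I would reindex. For $J \subseteq I$ one has $I' \setminus (J \cup \{k\}) = I \setminus J$ and $I' \setminus J = (I \setminus J) \cup \{k\}$, whence
\[ W_J \cap U_k = U_{J \cup \{k\}} \cap U_{I' \setminus (J \cup \{k\})}^* \quad\text{and}\quad W_J \cap U_k\drc = U_J \cap U_{I' \setminus J}^*. \]
As $J$ ranges over the subsets of $I$, the pairs $\{\,J,\, J \cup \{k\}\,\}$ enumerate all subsets of $I'$ without repetition. Substituting the displayed refinement of each $W_J$ into $V = \bigoplus_{J \subseteq I} W_J$, and using that refining every summand of an internal direct sum decomposition again yields an internal direct sum decomposition, I would obtain $V = \bigoplus_{J' \subseteq I'}(U_{J'} \cap U_{I' \setminus J'}^*)$, which closes the induction.

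I do not expect a serious obstacle: once \thmref{mainthm} is on hand the argument is essentially bookkeeping, and indeed the case $n = 2$ is exactly \thmref{mainthm2}. The points that deserve care are the empty-intersection conventions $U_\emptyset = U_\emptyset^* = V$; the verification that each $W_J$ is genuinely a direct summand of $V$ (so that \thmref{mainthm} applies and the refined sums are truly direct); and the routine but necessary remark that composing a direct sum decomposition with direct sum decompositions of its summands again gives a direct sum decomposition — routine because ``internal direct sum'' here means nothing more than unique representability of each element as a sum of components from the summands.
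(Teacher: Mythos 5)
Your argument is correct and is essentially the paper's own proof, which is stated there only as ``an easy induction on $|I|$\ldots where we peel off one $U_i$ at a time''; you have simply filled in the bookkeeping (base case convention, application of Theorem~\ref{mainthm} to each summand $W_J$ against $V=U_k\oplus U_k\drc$, and the reindexing over subsets of $I\cup\{k\}$). No gaps.
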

\begin{proof}  An easy induction on $\vert I \vert$ starting from
Theorem~\ref{mainthm21}, where we peel off one $U_i$ at a time. 
\end{proof}

\begin{defn} As usual, we call an $R$-module $V$
\textbf{indecomposable}, if $V \ne \{ \vzero\}$ and $V$ has \textit{no}
decomposition $V = W_1 \oplus W_2$ with $W_1 \ne \{ \vzero \}$, $W_2 \ne \{\vzero\}$.
\end{defn}

Let us turn to the indecomposable direct summands of $V$.

\begin{lem}\label{sum2} If $T$ and $Y$ are indecomposable direct summands of
$V$, then either $T = Y$ or  $T+Y \cong T \oplus Y$.
\end{lem}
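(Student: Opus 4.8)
The plan is to reduce everything to the refinement results already proved, principally Theorem~\ref{mainthm} (and Theorem~\ref{mainthm21}), together with the bare definition of indecomposability. Fix direct decompositions $V = T \oplus T\drc = Y \oplus Y\drc$. Since $T$ is a direct summand, it is in particular a weak complement of $T\drc$, so Theorem~\ref{mainthm}, applied to the decomposition $V = Y \oplus Y\drc$, gives
\[ T = (T \cap Y) + (T \cap Y\drc), \]
and this sum is automatically direct because its two summands sit inside the complementary submodules $Y$ and $Y\drc$ of the ambient direct sum $V = Y \oplus Y\drc$. Symmetrically, $Y = (Y \cap T) \oplus (Y \cap T\drc)$. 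Because $T$ is indecomposable (hence nonzero), exactly one of $T\cap Y$ and $T \cap Y\drc$ is $\{\vzero\}$ while the other equals $T$; likewise exactly one of $Y\cap T$ and $Y \cap T\drc$ is $\{\vzero\}$ while the other equals $Y$.

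Next I would do a two-case split according to whether $T \cap Y = \{\vzero\}$. If $T \cap Y \ne \{\vzero\}$, then the dichotomy for $T$ forces $T\cap Y\drc = \{\vzero\}$, i.e. $T = T\cap Y \subseteq Y$; and the dichotomy for $Y$ (using $Y \cap T = T\cap Y \ne \{\vzero\}$) forces $Y \cap T\drc = \{\vzero\}$, i.e. $Y = Y\cap T \subseteq T$. Hence $T = Y$. If instead $T \cap Y = \{\vzero\}$, then $T = T\cap Y\drc \subseteq Y\drc$, so for any equation $t_1 + y_1 = t_2 + y_2$ with $t_i \in T$, $y_i \in Y$, comparing the two components in the direct decomposition $V = Y \oplus Y\drc$ yields $t_1 = t_2$ and $y_1 = y_2$; thus $T + Y = T \oplus Y$. (Alternatively one can read this off directly from Theorem~\ref{mainthm21}: here $T + Y = (T\cap Y) \oplus (T\cap Y\drc) \oplus (T\drc \cap Y)$ with the first factor trivial and the remaining two equal to $T$ and $Y$.)

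I do not expect a serious obstacle here; the only points requiring a little care are (i) observing that the sum $(T\cap Y) + (T\cap Y\drc)$ coming out of Theorem~\ref{mainthm} is genuinely a direct sum — which is exactly where one invokes the ambient direct decomposition $V = Y \oplus Y\drc$ — and (ii) checking that the four a priori combinations arising from indecomposability of $T$ and of $Y$ are mutually consistent, which they are once the case split is organized around the single quantity $T \cap Y$. No idea beyond the refinement lemmas of this section seems to be needed.
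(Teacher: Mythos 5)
Your proof is correct and follows essentially the same route as the paper: apply Theorem~\ref{mainthm} to $V=Y\oplus Y\drc$ to get $T=(T\cap Y)\oplus(T\cap Y\drc)$, invoke indecomposability to force the dichotomy $T\subseteq Y$ or $T\cap Y=\{\vzero\}$, and in the second case deduce $T\subseteq Y\drc$ so that $T+Y=T\oplus Y$. Your extra care in noting why the sum produced by Theorem~\ref{mainthm} is direct (the summands lie in complementary pieces of $V=Y\oplus Y\drc$) is a point the paper glosses over, but the argument is the same.
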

\begin{proof}
We obtain from $V  = Y \oplus Y\drc$
by Theorem \ref{mainthm} that $T = (T \cap Y) \oplus (T \cap Y\drc)$, and then,
since $T$ is indecomposable, that $T \cap Y = T$ or $T \cap Y = \{ \vzero \} $, i.e.,
$T \subset Y$ or $T \cap Y =  \{ \vzero \}$.

If $T \subset Y$ we conclude from $V = T \cap T \drc$ in the  same way that
$Y = T \oplus (T\drc \cap Y)$, and then that $Y =T $, since $Y$ is indecomposable.
 If $T \cap Y = \{ \vzero \} $ we have from the above that $T = T \cap Y\drc$, i.e., $T \subset Y\drc$, and now infer from $V = Y \oplus Y\drc$ that $Y + T = Y \oplus T$.
%
%
\end{proof}

\begin{prop}\label{soc1} The indecomposable direct summands of $V$ are
independent, in the sense that if $T$ and  $ \{T_i: i \in I\}$ are distinct
indecomposable direct summands of $V$, then $$T \cap \big(\sum _i T_i \big) =
\{ \vzero \} .$$
\end{prop}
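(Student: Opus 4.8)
The plan is to reduce to a finite index set and then to play off two facts that are already available: first, that a finite sum of direct summands of $V$ is again a direct summand, whose direct complement is the intersection of the individual direct complements; and second, that two distinct indecomposable direct summands $T$ and $T_i$ satisfy $T \subseteq T_i\drc$. Granting these, $T$ lands inside the direct complement of $\sum_i T_i$, which immediately forces the intersection to be $\{\vzero\}$.

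Since any element of $\sum_{i\in I} T_i$ is a finite sum $t_{i_1}+\cdots+t_{i_m}$ with $t_{i_k}\in T_{i_k}$, it suffices to prove the assertion when $I$ is finite, so I would assume $I=\{1,\dots,n\}$ from the outset. Then I would set $S:=\sum_{i\in I}T_i$ and, by induction on $n$ using Corollary~\ref{lacki} (the sum of two direct summands of $V$ is a direct summand, and $(A+B)\drc=A\drc\cap B\drc$), establish that $S$ is a direct summand of $V$ with $S\drc=\bigcap_{i\in I}T_i\drc$: peel off $T_n$, apply the inductive hypothesis to $T_1+\cdots+T_{n-1}$, and then apply Corollary~\ref{lacki} to the pair $T_1+\cdots+T_{n-1}$ and $T_n$.

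Next I would invoke the ``orthogonality'' already obtained in the course of proving Lemma~\ref{sum2}: since $T\ne T_i$ are indecomposable direct summands, Theorem~\ref{mainthm} applied to the decomposition $V=T_i\oplus T_i\drc$ gives $T=(T\cap T_i)\oplus(T\cap T_i\drc)$, and indecomposability of $T$ together with the symmetric argument applied to $V=T\oplus T\drc$ rules out $T\cap T_i=T$ (which would force $T=T_i$), so $T\cap T_i=\{\vzero\}$ and hence $T=T\cap T_i\drc\subseteq T_i\drc$. Combining this with the previous paragraph, $T\subseteq\bigcap_{i\in I}T_i\drc=S\drc$, and therefore $T\cap\sum_{i\in I}T_i=T\cap S\subseteq S\cap S\drc=\{\vzero\}$ because $V=S\oplus S\drc$.

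I expect no real obstacle: the only thing requiring care is the bookkeeping in the induction that yields $S\drc=\bigcap_{i\in I}T_i\drc$, and the brief extraction of $T\subseteq T_i\drc$ from the argument for Lemma~\ref{sum2}. Alternatively, one could feed the family $\{T\}\cup\{T_i:i\in I\}$ into Proposition~\ref{complem}, observe that every intersection of two or more of these distinct indecomposable summands is $\{\vzero\}$, which collapses the decomposition stated there into $V=\big(\bigcap T_i\drc\big)\oplus T\oplus\bigoplus_i T_i$, and read off the conclusion; but the route above is shorter.
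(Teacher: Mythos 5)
Your proof is correct. The paper disposes of this proposition in one line (``taking direct limits, we may assume that $I$ is finite, and then we are done by Theorem~\ref{mainthm21} and induction''), so both arguments share the same skeleton: reduce to finite $I$ and induct on consequences of Theorem~\ref{mainthm21}. Where you differ is in the concrete mechanism. The paper's intended induction presumably applies Theorem~\ref{mainthm21} directly to build up $T + T_1 + \dots + T_n$ as a direct sum step by step, whereas you route everything through complements: you first extract from the proof of Lemma~\ref{sum2} the containment $T \subseteq T_i\drc$ for each $i$ (a fact the paper proves but never isolates), then use Corollary~\ref{lacki} and induction to identify $\big(\sum_i T_i\big)\drc = \bigcap_i T_i\drc$, and conclude from $T \subseteq \bigcap_i T_i\drc$ that the intersection is $\{\vzero\}$. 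This is a clean repackaging: it replaces the bookkeeping of an iterated four-fold decomposition with a single containment argument, at the cost of invoking the uniqueness of direct complements (Corollary~\ref{2.6}) to make the notation $S\drc$ meaningful. All the ingredients you cite (Corollary~\ref{lacki}, the dichotomy in Lemma~\ref{sum2}, the finite-support reduction) are available at this point in the paper, so there is no gap.
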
 \begin{proof} Taking direct limits, we may assume that $I$ is finite,
and then we are done by Theorem~\ref{mainthm21} and induction.
\end{proof}

\begin{defn} The  \textbf{decomposition socle} $\socd(V)$ is the sum of
the indecomposable direct summands of $V$.
\end{defn}

Now let $\{ T_i : i \in I \}$ denote the set of all indecomposable
direct summands of $V$.

\begin{prop}\label{mainthm22}  When $I$ is finite,
$$\socd(V) = \sum _{i \in I} T_i =  \bigoplus _{i \in I} T_i ,$$
and is a direct summand of $V$, with direct complement $\bigcap
\limits_{i \in I} T_i\drc.$
\end{prop}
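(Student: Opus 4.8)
The plan is to reduce everything to \propref{complem}, applied to the finite family $(T_i : i \in I)$ of \emph{all} indecomposable direct summands of $V$ (finite by hypothesis), each equipped with its decomposition $V = T_i \oplus T_i\drc$, where $T_i\drc$ is the unique direct complement furnished after \corref{2.6}. That proposition then yields the direct sum decomposition
\[ V \;=\; \bigoplus_{J \subseteq I} \bigl(T_J \cap T_{I\setminus J}^{*}\bigr), \qquad T_J := \bigcap_{j \in J} T_j, \quad T_{I\setminus J}^{*} := \bigcap_{i \in I\setminus J} T_i\drc, \]
and the whole point is to show that this sum collapses, the summands indexed by $J$ with $|J| \geq 2$ all vanishing while the rest reassemble into $\bigcap_{i\in I} T_i\drc$ together with the individual $T_k$.

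First I would record, essentially re-running the proof of \lemref{sum2}, how a pair of distinct indecomposable direct summands $T \neq Y$ sits together: applying \thmref{mainthm} to $V = Y \oplus Y\drc$ gives $T = (T\cap Y) \oplus (T\cap Y\drc)$, so indecomposability of $T$ forces $T \subseteq Y$ or $T \cap Y = \{\vzero\}$; the first alternative, fed back through \thmref{mainthm} applied to $V = T \oplus T\drc$, would force $Y = T$, a contradiction. Hence $T \cap Y = \{\vzero\}$, and therefore $T = T \cap Y\drc \subseteq Y\drc$.

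With this in hand I would evaluate $T_J \cap T_{I\setminus J}^{*}$ by cases on $|J|$. For $J = \emptyset$ one has $T_\emptyset = V$, so the summand is $\bigcap_{i\in I} T_i\drc$. For a singleton $J = \{k\}$ the summand is $T_k \cap \bigcap_{i \neq k} T_i\drc$, which equals $T_k$ since $T_k \subseteq T_i\drc$ for every $i \neq k$ by the previous step. For $|J| \geq 2$, choosing distinct $j_1, j_2 \in J$ gives $T_J \subseteq T_{j_1} \cap T_{j_2} = \{\vzero\}$, so that summand is $\{\vzero\}$. Discarding the zero summands and regrouping the rest---harmless inside a direct sum, which here is literally a Cartesian product---turns \propref{complem} into
\[ V \;=\; \Bigl(\bigcap_{i\in I} T_i\drc\Bigr) \;\oplus\; \bigoplus_{k\in I} T_k . \]
All the assertions can then be read off at once: $\sum_{i\in I} T_i = \bigoplus_{i\in I} T_i$, so in particular the sum is direct; this module is a direct summand of $V$; its direct complement, unique by \corref{2.6}, is $\bigcap_{i\in I} T_i\drc$; and $\socd(V) = \sum_{i\in I} T_i$ by definition. (One could instead run the induction on $|I|$ directly from \thmref{mainthm21}, much as in \propref{soc1}, but routing through \propref{complem} packages exactly that induction.)

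The one load-bearing point is the vanishing of the $|J| \geq 2$ summands, i.e.\ that distinct indecomposable direct summands meet only in $\{\vzero\}$; this is precisely the content established inside the proof of \lemref{sum2}, and once it is available the remainder is just bookkeeping within a fixed finite decomposition of $V$, so I anticipate no further obstacle.
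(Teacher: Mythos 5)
Your proof is correct, but it reaches the conclusion by a different route than the paper. The paper runs a linear induction: setting $V_r = \sum_{i=1}^r T_i$, it uses \corref{lacki} to see that each $V_r$ is a direct summand, writes $V = V_r \oplus W_r$, invokes \propref{soc1} and \thmref{mainthm} to get $T_{r+1} \subseteq W_r$, and concludes $V_{r+1} = V_r \oplus T_{r+1}$; the identification of the complement as $\bigcap_i T_i\drc$ is then deferred to ``another easy induction'' from \eqref{eq:2.3.3}. You instead apply the global refinement of \propref{complem} to the family $(T_i)$ and collapse the resulting $2^{|I|}$-fold decomposition, which requires only the pairwise facts $T_i \cap T_j = \{\vzero\}$ and $T_i \subseteq T_j\drc$ for $i \neq j$ --- facts you correctly extract by re-running the argument inside \lemref{sum2} (you could simply cite that lemma's proof, or \propref{soc1}). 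Your route buys two things: the direct complement $\bigcap_{i\in I} T_i\drc$ falls out of the $J = \emptyset$ term at no extra cost, so the paper's second induction is absorbed, and the independence of the $T_i$ is used only pairwise rather than via \propref{soc1}. The trade-off is that you lean on \propref{complem}, whose proof the paper only sketches as ``an easy induction on $|I|$ starting from \thmref{mainthm21}''; so in substance both arguments rest on the same engine, namely \thmref{mainthm} and its corollaries, but the bookkeeping is organized differently. The regrouping and discarding of zero summands inside the internal direct sum is legitimate, since the decomposition of \propref{complem} means unique representability of each element of $V$ as a sum over the index set $\{J : J \subseteq I\}$.
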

\begin{proof}  We may assume that $I = \{ 1, \dots, n \}.$
Let $V_r = \sum_{i = 1}^r T_i$, for $r \leq n$. By an easy
induction, we obtain from Corollary \ref{lacki} that every $V_r$ is
a direct summand of $V$, written
\begin{equation}\label{eq:2.3}
  V = V_r \oplus W_r.
\end{equation}
Furthermore, from Proposition  \ref{soc1}
\begin{equation}\label{eq:2.4}
   V_r \cap T_{r+1} = \{ \vzero\}.
\end{equation}
By Theorem \ref{mainthm} we conclude that
$$T_{r+1} = (V_{r} \cap T_{r+1} ) + (W_{r} \cap T_{r+1} ) = W_{r} \cap T_{r+1},$$
i.e.,
\begin{equation}\label{eq:2.5}
   T_{r+1} \subset W_r.
\end{equation}

Given elements $u,u' \in V_r$, $t,t' \in W_r$ with $u+t = u' +t'$ it follows from
\eqref{eq:2.3} and  \eqref{eq:2.5} that $u = u'$ and $t =t'$. Thus
$$ V_{r+1} = V_r \oplus T_{r+1}$$
for every $r < n$. The proposition now follows, up to the last
assertion, which can be obtained from \eqref{eq:2.3.3} in Corollary
\ref{lacki} by another easy induction.
%
\end{proof}

  For $I$   infinite, it is seen in the same way that $\socd(V)$ is
  the direct sum of the $T_i$, but now it need not be a  direct summand of
  $V$. Furthermore, we must cope with the possibility that  $$\socd(V)'_0 : =  \socd(V) \cup \{ \vzero \}$$
  is not  an $R$-submodule of $V$, but just a submonoid.

 To rectify the situation, we view $V$ as an $\N_0$-module, and let $$\Indc(V): =\{ W_i : i \in I '\}$$
 denote the set of all indecomposable
$\N_0$-direct summands of~$V$, and
$$W:= \sum _{i \in I'} W_i =  \bigoplus _{i \in I'} W_i .$$
Then $$W_0' := (V\setminus W) \cup \{ \vzero \}$$
 does not contain any
$\N_0$-indecomposable direct summands of $V$, and in particular none
of the ~$T_i$.

Let $R^\times$ denote the group of units of $R$. Every $\lambda \in
R^\times$ yields an automorphism $v \mapsto \lambda v$ of $(V, +,
\vzero),$ and so the group $R^\times$ operates on  $\Indc(V)$.  When
the semiring $R$ is additively generated by $R^\times$, the $T_i$
are precisely the sums $\sum _{i \in J} W_i =  \bigoplus _{i \in J}
W_i $ where $\{ W_i : i \in J\}$ is an orbit of $R^\times$ on
$\Indc(V).$ The following result follows immediately from these
observations.

\begin{thm}\label{mainthm6} Assume that the semiring $R$ is
additively generated by $R^\times$. Then $\socd(V)$ is the direct
sum  of all indecomposable direct summands of $V$, and the additive
monoid $$\socd(V)_0' := (V\setminus \socd(V)) \cup \{ \vzero \}$$ is an
$R$-submodule of $V$. (But if there are infinitely many such
indecomposable direct summands, $\socd(V)$ need not be a direct
summand of $V$.)\end{thm}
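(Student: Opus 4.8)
The plan is to prove Theorem~\ref{mainthm6} directly from the observations laid out just before its statement, essentially assembling three facts: (i) the description of the $T_i$ as orbit-sums of $\N_0$-indecomposables, (ii) the transfer of the ``does not contain indecomposable summands'' property from $W_0'$ to $\socd(V)_0'$, and (iii) the fact that $\socd(V)_0'$ is closed under the $R$-action. First I would fix the set $\Indc(V) = \{W_i : i \in I'\}$ of indecomposable $\N_0$-direct summands of $V$ and recall that since $R$ is additively generated by $R^\times$, each $T_i$ equals the direct sum $\bigoplus_{i \in J} W_i$ over an $R^\times$-orbit $J$ of $\Indc(V)$. In particular every indecomposable $R$-direct summand of $V$ is a sum of $\N_0$-indecomposable direct summands, so $\socd(V) = \sum_{i} T_i$ is itself a sum of certain $W_i$; applying Proposition~\ref{soc1} to the $\N_0$-module structure shows these $W_i$ are independent, so $\socd(V) = \bigoplus$ of the $W_i$ occurring in the orbits making up the $T_i$. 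This gives the first assertion.

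Next I would verify that $\socd(V)_0' := (V \setminus \socd(V)) \cup \{\vzero\}$ is an $R$-submodule. The key point is that $\socd(V)$ is a union of $R^\times$-orbits worth of $W_i$'s — more precisely, $\socd(V) \setminus \{\vzero\}$ is $R^\times$-invariant — so its set-theoretic complement is $R^\times$-invariant as well; thus $\socd(V)_0'$ is closed under scalar multiplication by units. For closure under addition and under arbitrary scalars I would invoke the earlier discussion: by Theorem~\ref{mainthm6}'s preamble, $W_0' := (V \setminus W) \cup \{\vzero\}$ (where $W = \bigoplus_{i \in I'} W_i$) is shown to contain no $\N_0$-indecomposable direct summand, and in the $\N_0$-setting one knows (from the paragraph preceding) that $W_0'$ is a submonoid of $V$. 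Since $R$ is additively generated by $R^\times$, an arbitrary $r \in R$ is a finite sum $\lambda_1 + \dots + \lambda_k$ with $\lambda_j \in R^\times$, so $rv = \lambda_1 v + \dots + \lambda_k v$; as each $\lambda_j v$ lies in the $R^\times$-invariant set $\socd(V)_0'$ whenever $v$ does, and as $\socd(V)_0'$ is additively closed (being a submonoid, inherited the same way $W_0'$ is), we get $rv \in \socd(V)_0'$. Hence $\socd(V)_0'$ is an $R$-submodule.

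The main obstacle I anticipate is justifying that $\socd(V)_0'$ is additively closed — equivalently, that a sum $v + v'$ of two elements outside $\socd(V)$ cannot land inside $\socd(V)$. The cleanest route is to reduce to the $\N_0$-module claim about $W_0'$, which the text asserts holds, and then observe $\socd(V) = W$ as sets: indeed every $W_i$ appears in exactly one $R^\times$-orbit, every orbit-sum is one of the $T_i$, and every $T_i$ arises this way, so $\sum_i T_i = \sum_{i \in I'} W_i = W$. With $\socd(V)_0' = W_0'$ on the nose, additive closure is immediate. I would then close the proof by noting $\socd(V) \cap \socd(V)_0' = \{\vzero\}$ tautologically, and recalling from the for-$I$-finite case (Proposition~\ref{mainthm22}) that when there are finitely many indecomposable direct summands $\socd(V)$ is a direct summand with complement $\bigcap_i T_i\drc$, whereas for infinitely many the independence of the $T_i$ still yields the direct sum but the union-with-$\{\vzero\}$ of the complement may fail to be a submodule — which is exactly the parenthetical caveat in the statement, and needs only a pointer to the earlier examples rather than a fresh argument.
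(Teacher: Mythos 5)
Your proposal is correct and follows essentially the same route as the paper, whose ``proof'' consists precisely of the observations preceding the statement: the identification $\socd(V)=W=\bigoplus_{i\in I'}W_i$ via the $R^\times$-orbit description of the $T_i$, invariance of $V\setminus\socd(V)$ under the unit action, and the decomposition $r=\lambda_1+\dots+\lambda_k$ with $\lambda_j\in R^\times$ to deduce closure under all scalars from closure under addition. The one point left implicit both by you and by the paper is why $(V\setminus\socd(V))\cup\{\vzero\}$ is additively closed; this holds because every element of $\socd(V)$ lies in some finite sum $T_{i_1}\oplus\cdots\oplus T_{i_k}$, which is a direct summand of $V$ and hence SA since $V$ lacks zero sums, so $\socd(V)$ itself is SA and Lemma~\ref{eqcond}(ii) applies.
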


Examples where the theorem applies are:
\begin{itemize} \dispace
  \item $R = \N_0$;
  \item $R$ is a semifield;
  \item $R$ is a so-called \textbf{supersemifield}, i.e., a supertropcial semiring (cf. \cite{IR1}, \cite{IKR3}) where both $R \sm (eR)$ and $(eR) \sm \{ \rzero\}$ are groups, with $e = \rone + \rone$;

  \item $R$ is replaced by
  the semiring $R[t_1, t_1^{-1}, \dots, t_n, t_n^{-1} ]$ of Laurent polynomials in $n$ variables over any of the previous semirings $R$.
\end{itemize}


\section{Projective $R$-modules}\label{Projmod}

We are ready to apply these results to the case that $V = R^n$.
Assume throughout this section that $R$  lacks zero sums.

\begin{defn} A module $P$ is \textbf{projective} if it is a
direct summand of a free $R$-module, and is \textbf{finitely
generated projective} if it is a summand of $R^n$.

An   element $e \in R$ is \textbf{idempotent} if $e^2 = e.$ Two
idempotents $e,f$ are \textbf{orthogonal} if $ ef = fe = \rzero.$ An
idempotent is \textbf{primitive} if it cannot be written as the sum
of two nonzero orthogonal idempotents.\end{defn}

Projective modules are treated much more generally in \cite{Dur}.
The same argument as in \cite{Mac} yields the analogous conclusion.

\begin{thm}\label{mainthm3} Any indecomposable projective $R$-module $P$ is isomorphic to a direct
summand of $R$, and thus has the form $Re$ for some primitive
idempotent $e$ of $R$. \end{thm}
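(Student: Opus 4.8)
The plan is to realize $P$ as a direct summand of some free module $R^n$, use the module-decomposition machinery of the previous section to find an indecomposable direct summand of $R^n$ isomorphic to $P$, and then show that an indecomposable direct summand of $R^n$ is in fact a direct summand of $R$ (hence of the form $Re$ for a primitive idempotent $e$).

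First I would write $P$ as a direct summand of $R^n$ for some $n$ (using that $P$ is finitely generated projective; if one only assumes $P$ is projective, one writes it as a summand of a free module $R^{(\Lambda)}$ and then argues that since $P$ is indecomposable it is ``carried'' by finitely many coordinates, reducing to $R^n$). So $R^n = P \oplus Q$ for some submodule $Q$. Now $R^n = \bigoplus_{i=1}^n R$ is itself a decomposition of $R^n$, so by Theorem~\ref{mainthm} applied to the direct summand $P$ (a direct complement, hence a weak complement), the decomposition $R^n = \bigoplus_i R$ descends: $P = \bigoplus_{i=1}^n (P \cap R_i)$, where $R_i$ denotes the $i$-th copy of $R$. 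Since $P$ is indecomposable and nonzero, exactly one summand $P \cap R_i$ is nonzero and equals $P$; that is, $P \subseteq R_i$ for some $i$, i.e.\ $P$ is (isomorphic to) a submodule of $R$ that is a direct summand of $R^n$.

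It remains to see that $P$, sitting inside $R \cong R_i$ and being a direct summand of $R^n$, is a direct summand of $R$ itself. For this, intersect the decomposition $R^n = P \oplus Q$ with $R_i$: by Theorem~\ref{mainthm21} (or directly by Theorem~\ref{mainthm}, applied this time to the weak complement $R_i$ of $\bigoplus_{j\ne i} R_j$), one gets $R_i = (R_i \cap P) \oplus (R_i \cap Q) = P \oplus (R_i \cap Q)$, so $P$ is a direct summand of $R_i \cong R$. Writing $R = P \oplus P'$ and expressing $\rone = e + e'$ with $e \in P$, $e' \in P'$, the usual argument shows $e$ is idempotent, $P = Re$, $P' = Re'$, and $e, e'$ are orthogonal; indecomposability of $P = Re$ forces $e$ to be primitive (if $e = f + g$ with $f,g$ nonzero orthogonal idempotents then $Re = Rf \oplus Rg$ is a proper decomposition).

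The main obstacle is the reduction from an arbitrary projective module to a summand of a finitely generated free module: for a genuinely infinitely generated free ambient module one must argue that an indecomposable summand lives inside a finite sub-sum of coordinate copies of $R$ — this uses that any element of $P$ has finite support together with indecomposability, and is exactly where one should lean on ``the same argument as in \cite{Mac}'' referenced just before the statement. Everything after that reduction is a routine application of Theorem~\ref{mainthm}/Theorem~\ref{mainthm21} plus the classical idempotent bookkeeping, which goes through verbatim because the relevant modules lack zero sums.
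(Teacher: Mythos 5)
Your proposal is correct and follows essentially the same route as the paper: descend the coordinate decomposition of the free module to $P$ via Theorem~\ref{mainthm}, use indecomposability to place $P$ inside a single copy of $R$, descend $F = P \oplus P\drc$ to that copy to exhibit $P$ as a direct summand of $R$, and finish with the idempotent $e = \pi(\rone)$. The only real difference is that the paper dispenses with your finite-support reduction by observing directly that if $(R\varepsilon_i)\cap P = \{\fzero\}$ for every basis element then $R\varepsilon_i \subset P\drc$ for all $i$, whence $P\drc = F$ and $P = 0$.
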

\begin{proof} Write the free module $F = P \oplus P\drc = \bigoplus \limits_{i\in I}  R \varepsilon _i$
with base $\{ \varepsilon_i : i \in I \}.$ For each $i \in I,$
\begin{enumerate}
  \eroman \dispace
\item $R\varepsilon_i = ((R\varepsilon_i) \cap P)\oplus ((R\varepsilon_i) \cap
P\drc);$
\item $P \varepsilon_i = (R\varepsilon_i \cap P)\oplus \sum\limits _{j \ne i} ((R\varepsilon_j) \cap
P).$\end{enumerate} If $(R\varepsilon_i) \cap P = \{ \fzero\},$ then
(i) yields $(R\varepsilon_i) \cap P\drc = R\varepsilon_i,$ so
$R\varepsilon_i \subset P\drc;$ if this holds for all $i$ then
$P\drc = F$ implying $P = 0.$

Thus we may assume that there is $i \in I$ with $(R\varepsilon_i)
\cap P \ne \{ \fzero \}.$ Now (ii) implies $(R\varepsilon_i) \cap P = P$, since
$P$ is indecomposable, whence $P$ is a direct summand of
$R\varepsilon_i,$ and we may assume that $F = R.$
%

 Now consider the projection $\pi : R \twoheadrightarrow P$
onto $P$. Letting $e = \pi (\rone)$ we have $e^2 = e \pi (\rone) = \pi (e) =
e,$ so $e$ is idempotent. If $e $ were not primitive then writing $e
= e_1 + e_2$  for orthogonal idempotents $e_1 , e_2$  would yield
$Re = Re_1 \oplus Re_2$. (The standard proof for modules over rings
also holds here.) This would contradict the indecomposability of
$P$.
\end{proof}

\begin{thm}\label{mainthm4} $\socd(R) = R$ iff $R$ has a finite set
of orthogonal primitive idempotents whose sum is~ $\rone$, iff $R$
is a finite direct sum of indecomposable projective modules.
\end{thm}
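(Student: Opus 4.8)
The plan is to prove the two asserted equivalences as a cycle of three implications: (i) $\socd(R)=R$ $\Rightarrow$ $R$ carries a finite orthogonal set of primitive idempotents summing to $\rone$; (ii) the existence of such a set $\Rightarrow$ $R$ is a finite direct sum of indecomposable projective modules; (iii) $R$ a finite direct sum of indecomposable projectives $\Rightarrow$ $\socd(R)=R$. Throughout, $R$ lacks zero sums, which is exactly the hypothesis under which the earlier results being invoked are valid.

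For (i): from $\rone \in \socd(R)$ and the definition of the decomposition socle, $\rone$ lies in a \emph{finite} sub-sum $T_1+\dots+T_k$ of indecomposable direct summands of $R$, say $\rone = t_1+\dots+t_k$ with $t_j\in T_j$. Multiplying by an arbitrary $r\in R$ and using that each $T_j$ is a submodule gives $r\in T_1+\dots+T_k$, so $R=T_1+\dots+T_k$. This sum is direct by the same peeling argument used in the proof of Proposition~\ref{mainthm22}: each $T_j$ is a direct summand of $R$, so by Corollary~\ref{lacki} the partial sums $V_r=T_1+\dots+T_r$ are direct summands $R=V_r\oplus W_r$; Proposition~\ref{soc1} gives $V_r\cap T_{r+1}=\{\vzero\}$, Theorem~\ref{mainthm} then forces $T_{r+1}\subseteq W_r$, and uniqueness of the representation in $R=V_r\oplus W_r$ yields $V_{r+1}=V_r\oplus T_{r+1}$. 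Hence $R=T_1\oplus\dots\oplus T_k$. Let $e_j$ be the $T_j$-component of $\rone$; the standard computation $e_j=e_j\rone=\sum_l e_je_l$, with $e_je_l\in T_l$ and uniqueness of the direct-sum representation, forces $e_je_l=\delta_{jl}e_j$, so the $e_j$ are pairwise orthogonal idempotents with $\sum_j e_j=\rone$, and the same reasoning shows $T_j=Re_j$. Finally each $e_j$ is primitive, since a splitting $e_j=f_1+f_2$ into nonzero orthogonal idempotents would give $Re_j=Rf_1\oplus Rf_2$ (the ring-theoretic argument, valid here as noted in the proof of Theorem~\ref{mainthm3}), contradicting the indecomposability of $T_j=Re_j$.

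For (ii): given orthogonal primitive idempotents $e_1,\dots,e_k$ with $\sum_j e_j=\rone$, one has $R=R\rone\subseteq\sum_j Re_j\subseteq R$, and multiplying a relation $\sum_j r_je_j=\sum_j s_je_j$ through by $e_l$ and invoking orthogonality gives $r_le_l=s_le_l$; hence $R=Re_1\oplus\dots\oplus Re_k$. Each $Re_j$ is a direct summand of the free module $R$, hence finitely generated projective, and it is indecomposable precisely because $e_j$ is primitive (again the standard dictionary between decompositions of $Re_j$ and orthogonal idempotent splittings of $e_j$), which proves (ii). For (iii): if $R=P_1\oplus\dots\oplus P_k$ with each $P_j$ indecomposable and projective, then each $P_j$ is an indecomposable direct summand of $R$, so $P_j\subseteq\socd(R)$ by definition of the decomposition socle; therefore $R=\sum_j P_j\subseteq\socd(R)\subseteq R$, i.e. $\socd(R)=R$.

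I expect the only genuine work to be in step (i), namely establishing that membership of $\rone$ in $\socd(R)$ actually produces a \emph{finite direct} decomposition of $R$ as a module over itself, and that the idempotents extracted from it are orthogonal and primitive. Steps (ii) and (iii) are the familiar idempotent/direct-summand correspondence of ring theory, which transfers verbatim because no subtraction enters; the lacking-zero-sums hypothesis is felt only through the earlier results (Proposition~\ref{soc1}, Corollary~\ref{lacki}, Theorems~\ref{mainthm} and~\ref{mainthm3}) that it underpins.
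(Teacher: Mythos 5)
Your proposal is correct and follows essentially the same route as the paper: the paper's proof also reduces everything to Theorem~\ref{mainthm3} plus the observation that $\rone$, lying in the sum of indecomposable summands, must lie in a finite sub-sum, which forces $R$ to be that finite (direct) sum. You have simply spelled out the idempotent/direct-summand dictionary and the peeling argument that the paper dismisses as ``standard arguments from ring theory.''
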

\begin{proof} In view of Theorem~\ref{mainthm3}, the only thing
remaining to check here is the finiteness. But this is another
standard argument taken from ring theory. If $R = \bigoplus _i P_i,$
then the unit element~$\rone$ is in this sum, and thus is some
finite sum of elements $\sum_i r_i e_i,$ implying $R$ is the sum of
these~$P_i$.
\end{proof}

\begin{cor}\label{mainthm31} If $R$ has a finite set
of orthogonal primitive idempotents $\{e_1, \dots, e_m\}$ whose sum
is $\rone$, then every finitely generated projective $R$-module $P$
is a finite direct sum of indecomposable projective modules, i.e.,
$P \cong \bigoplus _{i=1}^m (Re_i)^{n_i}$ for suitable $n_i$, and
this direct sum decomposition is unique.\end{cor}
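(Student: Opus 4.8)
The plan is to reduce to the decomposition $R = \bigoplus_{i=1}^m Re_i$ and then transport it into $P$ by the descent machinery of \S2. First I would note that $\rone = e_1 + \dots + e_m$ with the $e_i$ orthogonal idempotents gives $R = Re_1 \oplus \dots \oplus Re_m$ as a left $R$-module (the maps $r\mapsto re_i$ are idempotent projections, and orthogonality yields $Re_i \cap \sum_{j\ne i} Re_j = \{\rzero\}$, using that $x = xe_i$ for $x\in Re_i$); moreover each $Re_i$ is indecomposable, since $e_i$ is primitive (the standard argument for modules over rings, already used in the proof of Theorem~\ref{mainthm3}, applies verbatim: a splitting $Re_i = W_1 \oplus W_2$ produces orthogonal idempotents $f_1 + f_2 = e_i$ with $f_j \in W_j$). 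Consequently the free module $R^n$ splits as $R^n = \bigoplus_{k=1}^{mn} S_k$ into $mn$ indecomposable direct summands, each $S_k$ being one of the $n$ copies of one of the $Re_i$.

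Next, since $P$ is finitely generated projective, write $R^n = P \oplus P\drc$. I would then descend $R^n = \bigoplus_k S_k$ to $P$: applying Theorem~\ref{mainthm} with $P$ as the weak complement to the splitting $R^n = S_k \oplus \bigoplus_{l\ne k}S_l$ shows $P\cap S_k$ is a direct summand of $P$, while applying it with $S_k$ as the weak complement to $R^n = P\oplus P\drc$ gives $S_k = (S_k\cap P)\oplus(S_k\cap P\drc)$, so that $P\cap S_k$ is a direct summand of the indecomposable module $S_k$ and hence equals $\{\vzero\}$ or $S_k$. Peeling off the $S_k$ one at a time via Theorem~\ref{mainthm2} (as in the proof of Proposition~\ref{complem}) yields $P = \bigoplus_k (P\cap S_k)$. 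Therefore $P = \bigoplus_{k\in K}S_k$ with $K = \{k : S_k\subseteq P\}$, and collecting the $S_k$ according to which $Re_i$ they copy gives $P\cong\bigoplus_{i=1}^m (Re_i)^{n_i}$ with $0\le n_i\le n$.

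For uniqueness, I would observe that $P$, being a finite direct sum of indecomposable direct summands, coincides with $\socd(P)$; by Proposition~\ref{mainthm22}, together with Lemma~\ref{sum2} and Proposition~\ref{soc1}, the family of indecomposable direct summands of $P$ is finite and is the \emph{only} decomposition of $P$ into indecomposables (as submodules of $P$). Hence any isomorphism $P\xrightarrow{\sim}\bigoplus_{i=1}^m(Re_i)^{n_i'}$ carries this unique indecomposable decomposition onto the evident one of the target, and comparing the number of summands in each isomorphism class forces $\sum_{j:\,Re_j\cong Re_i}n_j = \sum_{j:\,Re_j\cong Re_i}n_j'$ for all $i$; in particular $n_i = n_i'$ when the $Re_i$ are pairwise non-isomorphic.

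The hard part is precisely this last step. All the uniqueness statements of \S2 concern decompositions \emph{inside a fixed module}, so one must first check that ``uniqueness of the indecomposable decomposition as submodules of $P$'' upgrades to the Krull--Schmidt-type assertion about $P$ given only up to isomorphism, and then deal with the bookkeeping when several of the $Re_i$ happen to be isomorphic to one another — in which case only the multiset of isomorphism types of indecomposable summands, with multiplicities, is canonically attached to $P$, and the phrase ``$P\cong\bigoplus(Re_i)^{n_i}$ is unique'' should be read accordingly.
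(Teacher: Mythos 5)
Your proof is correct and follows the route the paper intends: the corollary is stated there without an explicit proof, and your argument assembles exactly the paper's own tools --- the splitting $R=\bigoplus_{i=1}^m Re_i$ into indecomposables coming from the primitive orthogonal idempotents, descent of decompositions via Theorem~\ref{mainthm} (exactly as in Lemma~\ref{sum2} and Proposition~\ref{complem}) to obtain $P=\bigoplus_{k\in K}S_k$, and the uniqueness of the indecomposable decomposition furnished by Proposition~\ref{mainthm22}. Your closing caveat, that ``uniqueness'' should be read as uniqueness of the internal decomposition into indecomposable submodules (equivalently, of the multiset of isomorphism types when some $Re_i$ happen to be isomorphic), is a fair and accurate gloss on the paper's slightly loose phrasing and does not indicate any gap.
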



\section{Submodules satisfying the summand absorbing property}\label{weakc}

Throughout,  $R$ is a semiring and $V$ an $R$-module.  Perhaps
surprisingly at first glance, uniqueness of decompositions can be
proved in   settings where we drop the requirement that  ~$V$ lacks
zero sums (but strengthen the requirement on $W$).
So we drop this hypothesis and
focus instead on its submodule $W$, first in conjunction with SA
(Definition~\ref{Basdef0}), and then in terms of ``semidirect
complements.''

\begin{prop}\label{2.4} Assume that $W$ is a submodule of $V$ and
that $T$ is a weak complement of~$W$ in $V$. Assume also that $V
\setminus T$ is closed under addition. Then $W$ lacks  zero sums
(and so $T$ is the unique weak complement of $W$ in $V$).\end{prop}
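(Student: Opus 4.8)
The plan is to prove directly that $W$ lacks zero sums, and then to read off the uniqueness from a result already in hand. So I would start from arbitrary $w_1, w_2 \in W$ with $w_1 + w_2 = \vzero$, aiming to force $w_1 = w_2 = \vzero$. The first fact I would isolate is that $W \cap T = \{\vzero\}$; this is recorded in Remark~\ref{weakh} as an immediate consequence of $T$ being a weak complement of $W$ (take $t_1 = \vzero$ there). Hence every \emph{nonzero} element of $W$ lies in the set-theoretic complement $V \sm T$, whereas $\vzero$ itself lies in $T$, since $T$ is a submodule.

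Next I would run a short case analysis on how many of $w_1, w_2$ vanish. If exactly one of them, say $w_2$, is $\vzero$, then $w_1 = w_1 + w_2 = \vzero$, contradicting $w_1 \neq \vzero$; the other one-sided case is symmetric. Thus the only case in which the new hypothesis is needed is when both $w_1$ and $w_2$ are nonzero. In that case $w_1, w_2 \in V \sm T$ by the previous paragraph, and since $V \sm T$ is assumed closed under addition, $w_1 + w_2 \in V \sm T$ as well; but $w_1 + w_2 = \vzero \in T$, a contradiction. Therefore $w_1 = w_2 = \vzero$, i.e. $W$ lacks zero sums.

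Finally, once $W$ is known to lack zero sums, Corollary~\ref{2.3} applies verbatim: a submodule of $V$ lacking zero sums has at most one weak complement in $V$. Since $T$ is by hypothesis a weak complement of $W$, it is the unique one, which gives the parenthetical assertion.

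I do not expect a genuine obstacle: the argument is essentially a two-line case analysis feeding into Corollary~\ref{2.3}. The only points one must not skip are that $\vzero \in T$ (so that the closure hypothesis on $V \sm T$ is not being applied to a sum that equals $\vzero$), and that the degenerate case in which one of the $w_i$ already vanishes has to be dispatched before invoking closure — otherwise one would be tempted to apply closure to a sum involving a summand that need not lie in $V \sm T$.
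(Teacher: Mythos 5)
Your argument is correct and is essentially the paper's own proof in contrapositive form: the paper takes $w_1,w_2\in W\setminus\{\vzero\}$, notes $w_i\notin T$ since $W\cap T=\{\vzero\}$, concludes $w_1+w_2\notin T$ by the closure hypothesis and hence $w_1+w_2\neq\vzero$, with the uniqueness likewise read off from Corollary~\ref{2.3}. Your extra case analysis for when one $w_i$ already vanishes is harmless but not needed, since that case never produces a zero sum with a nonzero summand.
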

\begin{proof} Let $w_1, w_2 \in W \setminus \{ \vzero  \}$. Then $w_i \not\in T$ for $i = 1,2$, implying $w_1 + w_2 \not\in T$. Thus certainly $w_1 + w_2 \ne
\vzero$. \end{proof}

\begin{lem}\label{eqcond} The following conditions are equivalent for $W\subset V$:
  \begin{enumerate}\eroman \dispace
  \item $W$   is SA in $V$;

    \item  The set-theoretic
complement $ W'$ of $W$ is an additive (monoid) ideal, in the sense
that $w + v \in W'$ for all $w \in W',$ $v \in V$;

 \item If $\sum\limits_{i=1}^m  a_i \in W$, then each $a_i \in W$.
\end{enumerate}\end{lem}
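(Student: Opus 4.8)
The plan is to prove the cycle of implications (i) $\Rightarrow$ (iii) $\Rightarrow$ (i) together with the equivalence (i) $\Leftrightarrow$ (ii), each step being a direct unwinding of the definitions, with no appeal to the lacking-zero-sums hypothesis (which is not assumed in this section).

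First I would establish (i) $\Rightarrow$ (iii) by induction on $m$. The case $m = 1$ is trivial and $m = 2$ is exactly Definition~\ref{Basdef0}. For $m \geq 3$, write $\sum_{i=1}^m a_i = a_1 + \bigl(\sum_{i=2}^m a_i\bigr)$; since this element lies in $W$ and $W$ is SA in $V$, we obtain $a_1 \in W$ and $\sum_{i=2}^m a_i \in W$, and the inductive hypothesis applied to the shorter sum gives $a_i \in W$ for $i = 2, \dots, m$. The reverse implication (iii) $\Rightarrow$ (i) is immediate: specializing (iii) to $m = 2$ says that $x + y \in W$ forces $x \in W$ and $y \in W$, which is the definition of SA.

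For (i) $\Leftrightarrow$ (ii) I would argue by contraposition. Condition (ii) says precisely that $w \notin W$ implies $w + v \notin W$ for every $v \in V$; equivalently (contrapositive), $w + v \in W$ implies $w \in W$, for all $w, v \in V$. Using commutativity of $+$ in the monoid $V$, from $w + v \in W$ we also have $v + w \in W$, hence $v \in W$; so (ii) is equivalent to the statement ``$w + v \in W \Rightarrow w \in W \text{ and } v \in W$'', which is (i). Conversely, if $W$ is SA and $w \in W'$, then $w + v \notin W$ for every $v \in V$ (otherwise $w \in W$ by SA), so $W'$ is an additive monoid ideal, establishing (ii).

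I expect no genuine obstacle here: the argument is a matter of carefully tracking quantifiers and invoking commutativity of the monoid operation, and the only mildly substantive point is the routine induction used to pass from the two-term formulation of SA to the $m$-term formulation in (iii).
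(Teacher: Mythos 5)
Your proof is correct and follows essentially the same route as the paper's (which merely states that (i)\,$\Leftrightarrow$\,(ii) is clear, (i)\,$\Rightarrow$\,(iii) follows by induction on $m$, and (iii)\,$\Rightarrow$\,(i) is immediate); you have simply filled in the routine details of the induction and the contrapositive.
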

\begin{proof} $(i) \Leftrightarrow (ii)$ is clear, and $(i) \Rightarrow
(iii)$ by induction on $m$. $(iii) \Rightarrow (i)$ is immediate.
\end{proof}

We pass to the case of $R$-modules (which is not much of a
transition, since an additive  monoid is an $\N_0$-module).

\begin{lem} Assume that $\varphi : V_1 \rightarrow V_2$ is a homomorphism
of $R$-modules over an arbitrary semiring  $R$. If $T$ is a
SA-submodule of $V_2$, then $\varphi^{-1} (T)$ is a SA-submodule of
$V_1$.\end{lem}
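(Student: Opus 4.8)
The plan is to verify the two defining features of a SA-submodule of $V_1$ directly from the definitions, using only that $\varphi$ respects addition, scalar multiplication, and the zero element. No hypothesis on $R$ or on lacking zero sums is needed.

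First I would check that $\varphi^{-1}(T)$ is genuinely a submodule of $V_1$. Since $\varphi(\vzero_{V_1}) = \vzero_{V_2} \in T$, the preimage contains $\vzero_{V_1}$; and for $u, u' \in \varphi^{-1}(T)$ and $r \in R$ we have $\varphi(u + u') = \varphi(u) + \varphi(u') \in T$ and $\varphi(r u) = r \varphi(u) \in T$ because $T$ is a submodule of $V_2$, so $u + u' \in \varphi^{-1}(T)$ and $r u \in \varphi^{-1}(T)$.

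Next, for the summand absorbing property (Definition~\ref{Basdef0}), suppose $x, y \in V_1$ with $x + y \in \varphi^{-1}(T)$. Then $\varphi(x) + \varphi(y) = \varphi(x + y) \in T$, and since $T$ is SA in $V_2$ this forces $\varphi(x) \in T$ and $\varphi(y) \in T$, i.e., $x \in \varphi^{-1}(T)$ and $y \in \varphi^{-1}(T)$, which is exactly the required implication.

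I do not anticipate any real obstacle: the argument is a one-line diagram chase once the submodule check is recorded, and one could equally phrase it through Lemma~\ref{eqcond}(iii), noting that $\sum a_i \in \varphi^{-1}(T)$ iff $\sum \varphi(a_i) \in T$ iff each $\varphi(a_i) \in T$ iff each $a_i \in \varphi^{-1}(T)$.
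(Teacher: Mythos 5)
Your argument is correct and is essentially the paper's own proof: the core step (push $x+y$ through $\varphi$, apply SA in $V_2$, pull back) is identical, and your additional verification that $\varphi^{-1}(T)$ is a submodule is a harmless elaboration the paper leaves implicit.
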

\begin{proof} Let $x, y \in V$, and assume $x + y \in \varphi^{-1} (T)$. Then
$\varphi (x) + \varphi (y) = \varphi (x+y) \in T$, and so $\varphi
(x), \varphi (y) \in T$, whence $x, y \in \varphi^{-1} (T)$.
\end{proof}


\begin{rem}
$ $
\begin{enumerate} \dispace
  \item[a)] If $(U_i
  \ds\vert i \in I)$ is a family of
SA submodules of an $R$-module $V$, then the intersection
$\bigcap\limits_{i \in I} U_i$ clearly  also is SA in $V$.

 \item[b)] If $(U_i \ds\vert i \in I)$ is an upward directed family of
SA submodules of~ $V$, i.e. for any $i, j \in I$ there exists $k \in
I$ with $U_i \subset U_k$, $U_j \subset U_k$, then the union
$\bigcup\limits_{i \in I} U_i$ is a SA submodule of
$V$.\end{enumerate}
\end{rem}

%
%

Note in Definition~\ref{Basdef} that for $R = \N_0,$
Lemma~\ref{eqcond}(ii) implies that any SA submodule $T$ is a weak
complement of $T\stc \cup \{ \vzero \}$.

\begin{thm}\label{2.51} Assume that $V = W + T$, where $T$ is SA in $V$, and
$W \cap T = \{ \vzero  \}$.
\begin{enumerate} \dispace \eroman
\item Then $T$ is the unique weak complement of $W$ in $V$.
\item  If in addition $U$ is a submodule of $V$ with $W + U = V$ and also $U$ is SA in $V$,
then $T \subset U$, and $T$ is the unique weak complement of $W
\cap U$ in $U$.
\end{enumerate}
\end{thm}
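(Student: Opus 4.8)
The plan is to reduce everything to the summand‑absorbing property together with the weak‑complement machinery already available in this section, principally Proposition~\ref{2.4} and Lemma~\ref{eqcond}. For part~(i) I would first verify that $T$ is in fact a weak complement of $W$ in $V$: the equality $W+T=V$ is given, so by Remark~\ref{weakh} it is enough to show that $w_1+t_1=t_2$ with $w_1\in W$ and $t_1,t_2\in T$ forces $w_1=\vzero$; but $w_1+t_1=t_2\in T$ and $T$ is SA in $V$, so $w_1\in T$, whence $w_1\in W\cap T=\{\vzero\}$. Next, since $T$ is SA its set‑theoretic complement $V\setminus T$ is an additive ideal (Lemma~\ref{eqcond}, (i)$\Rightarrow$(ii)), in particular closed under addition. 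With $T$ a weak complement of $W$ and $V\setminus T$ closed under addition, Proposition~\ref{2.4} applies and gives at once both that $W$ lacks zero sums and that $T$ is the \emph{unique} weak complement of $W$ in $V$, which is exactly~(i). (Alternatively one can check directly that $W$ lacks zero sums---if $w_1+w_2=\vzero\in T$ then SA of $T$ gives $w_i\in W\cap T=\{\vzero\}$---and then invoke the uniqueness of weak complements for submodules lacking zero sums.)

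For part~(ii), the inclusion $T\subseteq U$ is the cheapest step: given $t\in T$, use $W+U=V$ to write $t=w+u$ with $w\in W$, $u\in U$; then $t=w+u\in T$ and, since $T$ is SA, $w\in T$, so $w\in W\cap T=\{\vzero\}$ and therefore $t=u\in U$. It remains to show that $T$ is the unique weak complement of $W\cap U$ in $U$, and here the idea is simply to apply part~(i) with the triple $(U,\ W\cap U,\ T)$ in place of $(V,W,T)$. So I would check the three hypotheses of~(i) for that triple: (a) $T$ is SA in $U$, which is immediate because $T$ is SA in the larger module $V$ and $U\subseteq V$; (b) $(W\cap U)\cap T\subseteq W\cap T=\{\vzero\}$; and (c) $U=(W\cap U)+T$, which holds because for $u\in U$ we may write $u=w+t$ with $w\in W$, $t\in T\subseteq U$ (using $W+T=V$ and $T\subseteq U$), and then $u=w+t\in U$ together with SA of $U$ forces $w\in U$, i.e.\ $w\in W\cap U$. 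Part~(i) applied to $(U,W\cap U,T)$ then delivers precisely the assertion.

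I do not expect a genuine obstacle here; the whole proof is a careful chain of applications of the SA property, and the only real content is recognising that the uniqueness clause of~(ii) is the same statement as~(i) transported into $U$. The one point that must be handled with care is that we are now \emph{outside} the standing hypothesis of the previous sections that $V$ lacks zero sums, so any appeal to a result proved there (for instance if one runs the direct‑proof variant via Theorem~\ref{mainthm} or Corollary~\ref{2.3}) has to be justified by first establishing that the submodule in question---$W$, respectively $W\cap U$---itself lacks zero sums; routing the argument through Proposition~\ref{2.4} and Lemma~\ref{eqcond}, both of which already live in the present section, avoids this issue altogether.
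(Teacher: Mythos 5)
Your proof is correct and follows essentially the same route as the paper: verify that $T$ is a weak complement of $W$ directly from the SA property, obtain uniqueness via Lemma~\ref{eqcond} and Proposition~\ref{2.4}, show $U=(W\cap U)+T$ using that $U$ is SA, and conclude (ii) by applying part (i) to the triple $(U,\,W\cap U,\,T)$. The only (harmless) deviation is that you establish $T\subseteq U$ by a direct SA computation (essentially the argument of Proposition~\ref{2.9}), whereas the paper deduces it from the fact that $W$ lacks zero sums together with the earlier weak-complement results; both are valid, and your variant has the small advantage of not leaning on statements proved under the standing hypothesis of the previous section.
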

\begin{proof} (i): Let $w \in W \setminus \{ \vzero \}$ and $t \in T$.
Suppose that $w + t \in T$. Then $w \in T$, contradicting $W \cap T
= \{ \vzero \}$. Thus $(w + T) \cap T = \emptyset$.
\pSkip

  (ii): $V \setminus T$ is closed under addition. By Proposition
~\ref{2.4} we know that $W$  lacks zero sums and thus $T \subset
U$. Since $U$ is SA in $V$ we conclude from $V = W + T$ that $U = (W
\cap U) + T$. By part (i),   $T$ is the unique weak complement of $W
\cap U$ in $U$, since $T$ is SA in $U$.
\end{proof}

Here is one nice kind of weak complement.

\begin{defn}\label{semidc}  Let $W$ and $T$ be $R$-submodules of $V$.
%
%
$T$ is a \textbf{semidirect complement of~$W$ in} $V$ if $W + T = V$
and
\begin{equation}\label{sprod}\forall  w_1, w_2 \in W : \quad w_1 \ne w_2
\dss\Rightarrow (w_1 + T) \cap (w_2 + T) = \emptyset. \end{equation}
 In this case, we  also say that $V$ is the \textbf{semidirect sum} of $W$ and $T$ and
write
  $V = W \ltimes T.$
\end{defn}

 Condition~\eqref{sprod} can be recast  as follows: For any $w_1, w_2 \in W$,
$t_1, t_2 \in T$,
\begin{equation*}w_1 + t_1 = w_2 + t_2 \dss\Rightarrow w_1 = w_2.\end{equation*}
 This means that there exists an $R$-linear projection  $p : V
\rightarrow V$ given by $p(w+t) = w,$ with image $p(V) = W$ and
kernel $p^{-1} (\vzero) = T$.  We sometimes write
\begin{equation}\label{2.5} p = \pi_{W, T}.\end{equation}

%
%
%
%

In summary, we have the following hierarchy of conditions on modules
$T,W$  satisfying $W+T = V$, each implying the next, which are all
equivalent in classical module theory over a ring:

\begin{enumerate}\eroman \dispace
   \item  $T$ is a
direct complement of $W$;

  \item   $T$  is a semidirect complement of $W$;

    \item  $T$ is a weak complement  of $W$;

   \item    $W \cap T = \{ \vzero\}.$
  \end{enumerate}

Here the reverse implications may fail. We now address
``transitivity'' of these various complements.

%

\begin{quest} Assume that $W, S, T$ are submodules of an
$R$-module $V$ such that $S$ is a complement of $W$ in $U : = W + S$
of a certain type (direct, semidirect , weak) and $T$ is a
complement of $U$ in $V$ of the respective type. Then is  $S + T$ a
complement of $W$ in $V$,  of this respective type?
\end{quest}

This is obviously true for direct complements. It also holds for
semidirect complements. More explicitly, we have the following
facts.

\begin{prop}\label{sum1} If $U : = W + S = W \ltimes S$ and $V = U
\ltimes T$, then
\begin{align}
  S + T &  = S \ltimes T,\\
  W + T & = W \ltimes T \\
  V = W \ltimes (S \ltimes T) & = (W \ltimes S)
\ltimes T.
\end{align}
 \end{prop}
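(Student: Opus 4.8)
The plan is to verify the three displayed identities in turn, reducing each to the recast form of the semidirect-complement condition: $T'$ is a semidirect complement of $W'$ in $V'$ if and only if $W'+T'=V'$ and, for all $w_1,w_2\in W'$, $t_1,t_2\in T'$, the equation $w_1+t_1=w_2+t_2$ forces $w_1=w_2$. I would keep the projections $p=\pi_{W,S}:U\to U$ (image $W$, kernel $S$) and $q=\pi_{U,T}:V\to V$ (image $U$, kernel $T$) in hand throughout, since the composite $p\circ q:V\to V$ will do most of the work.

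First I would handle $S+T=S\ltimes T$. Surjectivity onto $U$'s complement: given $v\in V$, write $v=u+t$ with $u\in U$, $t\in T$ (as $V=U\ltimes T$), then $u=w+s$ with $w\in W$, $s\in S$; so $v=w+(s+t)$, which shows $W+(S+T)=V$ and in particular $V=W\ltimes$(something of the form $S+T$) once cancellation is checked — but for the first identity the point is just that $S+T$ together with $W$ spans $V$ and that $S+T$ carries a semidirect structure relative to $S$. Concretely, to see $S+T=S\ltimes T$ I would suppose $s_1+t_1=s_2+t_2$ with $s_i\in S$, $t_i\in T$. Apply $q$: since $S\subseteq U=\ker q$... no — rather, $s_i\in U$, so $q(s_i+t_i)=q(s_i)+q(t_i)=s_i+0=s_i$ because $t_i\in T=\ker q$ and $q$ fixes $U$ pointwise; hence $s_1=s_2$, and then $t_1=t_2$ follows by the cancellation built into $V=U\ltimes T$. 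That gives $S+T=S\ltimes T$. The identity $W+T=W\ltimes T$ is entirely analogous, applying $q$ to $w_1+t_1=w_2+t_2$ to get $w_1=w_2$ directly (note $w_i\in W\subseteq U$).

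For the last line, the two outer equalities with $V$ in the middle: $V=W\ltimes(S\ltimes T)$ means $W+(S+T)=V$ — already shown — together with the implication $w_1+(s_1+t_1)=w_2+(s_2+t_2)\Rightarrow w_1=w_2$. Here I would apply $q$ first to collapse the $t_i$, obtaining $w_1+s_1=w_2+s_2$ in $U$, and then apply the semidirect structure $U=W\ltimes S$ to conclude $w_1=w_2$. Symmetrically, $V=(W\ltimes S)\ltimes T=U\ltimes T$ is just the hypothesis rewritten, once one observes $W\ltimes S=U$ by definition; the content is that $(W+S)+T$ spans $V$ (clear) and that $u_1+t_1=u_2+t_2\Rightarrow u_1=u_2$, which is exactly $V=U\ltimes T$. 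So the display amounts to: reassociate, push through $q$, then push through $p$.

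I do not expect a genuine obstacle here — the only mild care needed is bookkeeping about which projection to apply and the fact that $q$ restricts to the identity on $U$ (so that intermediate sums in $W$ or $S$ are not disturbed), which is immediate from $q=\pi_{U,T}$ and $U=p(V)\supseteq W,S$. The closest thing to a subtlety is making sure that in the first identity one is asserting the correct thing: $S+T$ as an internal sum inside $V$ is literally $\{s+t:s\in S,t\in T\}$, and the semidirect-product notation $S\ltimes T$ is a statement about this set, not an external construction; the verification above via $q$ settles it. A remark worth inserting: the associativity of $\ltimes$ exhibited in the third line is the module-theoretic analogue of the classical associativity of internal direct sums, and it is what licenses writing iterated semidirect decompositions without parentheses in the sequel.
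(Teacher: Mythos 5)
Your argument is essentially the paper's second proof: the paper likewise works with the projections $\pi_{U,T}$ and $\pi_{W,S}$, composing them and identifying kernels, while you apply $\pi_{U,T}$ directly to the defining equations --- the content is the same. One caveat: your side-claim that ``$t_1=t_2$ follows by the cancellation built into $V=U\ltimes T$'' is not justified, since a semidirect decomposition determines only the $U$-component of an element, not its $T$-component; fortunately this claim is also unnecessary, because $S+T=S\ltimes T$ requires only $s_1=s_2$, which you do establish correctly. (The paper also gives a first proof avoiding projections altogether, arguing with the disjointness form $(w_1+S+T)\cap(w_2+S+T)=\emptyset$ of condition~\eqref{sprod}.)
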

 We give two proofs of these facts, having different flavors.

\begin{proof}[First proof] Here we use the definition of semidirect complements given in
\eqref{sprod}. Let $w_1, w_2 \in W$ and $w_1 \ne w_2$. Then $(w_1 +
S) \cap (w_2 + S) = \emptyset$ and so $w_1 + s_1 \ne w_2 + s_2$ for
any $s_1, s_2 \in S$. Since $T$ is a semidirect complement of $W +
S$ in $V$ we have in turn
\[ (w_1 + s_1 + T) \cap (w_2 + s_2 + T) = \emptyset. \]
 This proves that
\begin{equation}\label{ast0}
  (w_1 + S + T) \cap (w_2 + S + T) = \emptyset,
\end{equation}
  and it follows that
\[ (w_1 + T) \cap (w_2 + T) = \emptyset. \]

  If $s_1 \ne s_2 $ in $S$  then, since $s_1$ and $s_2$
are different elements of $W + S$, we also conclude from
\eqref{ast0} that $(s_1 + T) \cap (s_2 + T) = \emptyset$.
\end{proof}

\begin{proof}[Second proof] We employ the projections associated to
semidirect decompositions, cf.~\eqref{2.5}, identifying any
projection  $p : X \to X$ onto an $R$-module $X$ with the induced
surjection $X \twoheadrightarrow p (X)$. We have projections $p :=
\pi_{U, T} : V \twoheadrightarrow U$ and $q : = \pi_{W, S} : U
\twoheadrightarrow W$ with respective kernels $T$ and $S$. Then $r :
= q \circ p : V \twoheadrightarrow W$ is a projection  with kernel
$S + T$, yielding $V := W \ltimes (S + T)$. The projection  $r : V
\twoheadrightarrow W$ restricts to maps $r \vert (S + T)
\twoheadrightarrow S$ and   $r \vert (W + T) \twoheadrightarrow T$,
which both are projections with kernel $T$. Thus $S + T = S \ltimes
T$ and $W + T = W \ltimes T$.
\end{proof}

  For weak complements we cannot expect a transitivity statement such as
(4.5) above. But a ``mixed transitivity'' holds for weak and
semidirect complements.

\begin{prop}\label{2.8} Let $W, S, T$ be submodules of an $R$-module
$V$, and assume that $S$ is a weak complement of $W$ in $U : = W +
S$, while $T$ is a semidirect complement of $U$ in $V$. Then $S + T$
is a weak complement of $W$ in $V$. \end{prop}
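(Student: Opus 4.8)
The plan is to verify directly the two clauses of Definition~\ref{weakc1} for the pair $(S+T,\,W)$ inside $V$. The first clause, $W + (S+T) = V$, is immediate: $W + (S+T) = (W+S) + T = U + T = V$, using that $T$ is a semidirect (in particular an ordinary sum) complement of $U$. It then remains to establish the second clause, namely that $(w + (S+T)) \cap (S+T) = \emptyset$ for every $w \in W \setminus \{\vzero\}$. By Remark~\ref{weakh}, this is equivalent to showing: whenever $w + (s_1 + t_1) = s_2 + t_2$ with $w \in W$, $s_1, s_2 \in S$, $t_1, t_2 \in T$, one has $w = \vzero$.

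So suppose $w + s_1 + t_1 = s_2 + t_2$. The key move is to regroup the left-hand side as $(w + s_1) + t_1$ and observe that $w + s_1 \in W + S = U$, while also $s_2 \in S \subseteq U$. Thus the identity reads $u_1 + t_1 = u_2 + t_2$ with $u_1 := w + s_1$ and $u_2 := s_2$ both lying in $U$. Since $T$ is a semidirect complement of $U$ in $V$, the recast form of condition~\eqref{sprod} (the displayed implication following Definition~\ref{semidc}, applied with $U$ in the role of $W$) forces $u_1 = u_2$, i.e. $w + s_1 = s_2$ in $U$.

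Finally, apply the hypothesis that $S$ is a weak complement of $W$ in $U$: invoking Remark~\ref{weakh} once more, this time inside $U$, the equation $w + s_1 = s_2$ with $w \in W$ and $s_1, s_2 \in S$ yields $w = \vzero$, which is exactly what was needed. This settles both clauses, so $S+T$ is a weak complement of $W$ in $V$. I do not anticipate any genuine obstacle: the argument is a two-step ``peeling'' — first stripping off $T$ using the (stronger) semidirect property, then stripping off $S$ using the weak-complement property — and the only points requiring a little care are the inclusion $S \subseteq U$ and the equality $W + S = U$, both of which hold by the very definition of $U$.
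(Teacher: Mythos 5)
Your proof is correct and is essentially the paper's own argument in contrapositive form: the paper shows $w\ne\vzero$ forces $w+s_1\ne s_2$ in $U$ (weak complement of $W$ in $U$) and hence $(w+s_1+T)\cap(s_2+T)=\emptyset$ (condition~\eqref{sprod}), while you start from an alleged equality $w+s_1+t_1=s_2+t_2$, cancel $T$ via the recast form of \eqref{sprod}, and then cancel $S$ via Remark~\ref{weakh}. Same two ingredients, same order of ``peeling,'' so nothing further to add.
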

\begin{proof} Let $w \in W \setminus \{ \vzero \}$ and $s_1, s_2 \in S$.
Then $(w + S) \cap S = \emptyset$. Thus $w + s_1$ and $s_2$ are
different elements of $W + S = U$, which implies that $(w + s_1 + T)
\cap (s_2 + T) = \emptyset$. This proves that $(w + S + T) \cap (S +
T) = \emptyset$, as desired. \end{proof}

We finally mention a result of independent interest, which can be
obtained by a slight amplification of the proof of
Proposition~\ref{sum1}.

\begin{prop}\label{2.9} Assume that $W, T, U$ are submodules of an
$R$-module $V$ with $W + T \subset W + U$ and $W \cap T \subset W
\cap U$. Assume furthermore that $W$ lacks zero sums, and $T$ is SA
in~ $V$. Then $T \subset U$.
\end{prop}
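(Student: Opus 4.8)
The plan is to prove $T \subseteq U$ elementwise: fix an arbitrary $t \in T$ and show $t \in U$. The only structural data available are the two inclusions $W+T \subseteq W+U$ and $W\cap T \subseteq W\cap U$ together with the hypothesis that $T$ is SA in $V$, so the argument must be routed through exactly these, much as in the proof of Proposition~\ref{sum1} but without invoking projections.

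First I would note that $t = \vzero + t \in W + T \subseteq W + U$, since $\vzero \in W$; hence there exist $w \in W$ and $u \in U$ with $t = w + u$. Next, since $w,u \in V$ and $w + u = t \in T$, the summand-absorbing property of $T$ forces $w \in T$ and $u \in T$. In particular $w \in W \cap T \subseteq W \cap U \subseteq U$, so both $w$ and $u$ lie in the submodule $U$, whence $t = w + u \in U$. As $t \in T$ was arbitrary, $T \subseteq U$.

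I do not expect a real obstacle here: the argument is three lines, and the only point worth stating carefully is the very first one, that $t$ genuinely lies in $W+T$ (and therefore in $W+U$), which is immediate from $\vzero \in W$. Indeed this route uses only that $T$ is SA and that the two containments hold; the hypothesis that $W$ lacks zero sums is not needed for it, though it is natural to keep in the statement to match the surrounding hierarchy of complement-type results. One could instead amplify the projection-based second proof of Proposition~\ref{sum1}, but that is more elaborate and gains nothing, so I would present the elementary version above.
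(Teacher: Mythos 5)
Your proof is correct and is essentially identical to the one in the paper: write $t=w+u$ with $w\in W$, $u\in U$, use the SA property of $T$ to get $w\in W\cap T\subseteq W\cap U$, and conclude $t\in U$. Your side observation that the hypothesis that $W$ lacks zero sums is never used is accurate --- the paper's own argument does not use it either.
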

\begin{proof} Let $t \in T$ be given. We write $t = w + u$ with $w
\in W$, $u \in U$. Since $T$ is SA in $V$, this implies that $w \in
T$, whence $w \in W \cap T \subset W \cap U$. We conclude that $t =
w + u \in U$.\end{proof}

\section{The obstruction to the ``upper
bound''    condition}\label{ubobs}

Recall from  \cite{IKR3} that an additive monoid $(V,+,\vzero)$ is
\textbf{upper bound} if $x+y+z =x$ implies $x+y  =x$. This property
instantly implies ``lacking zero sums.''
The object of this section
is to study the obstruction to this condition.

\begin{defn}\label{defub} Define  \textbf{Green's partial preorder}
 on a monoid $(V, +, \vzero )$ by saying $$\text{$x \preceq y$ \ if \  $x+z = y$ for some $z$ in $V$.}$$
We write $x \equiv y$ if $x \preceq y$ and $y \preceq x$.
\end{defn}

Clearly $\preceq $ is reflexive and transitive, implying that
$\equiv$ is an equivalence relation; in fact, $\equiv$ is a
congruence, since if $x \preceq y$ then  $x +a \preceq y+a$ for any
$a\in V$. (Indeed, if $x+z = y$, then $x+a+z = y+a$.) Accordingly,
$\brV : = V/\equiv$ also is a monoid, with the induced operation
$\bar x  + \bar y  = \overline{x+y} ,$ where $\bar x$ denotes the
equivalence class of $x$. $ \preceq $ induces a partial order $\le$
on~$\brV$, given by
\begin{equation} \bar x \le \bar y \quad \text{if} \quad x
\preceq y.
\end{equation}

\begin{lem} The monoid $\brV$ is upper bound.
\end{lem}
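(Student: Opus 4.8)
The plan is to unwind the definitions and show that the upper bound condition, when transported to the quotient $\brV = V/\equiv$, collapses to a tautology about the induced partial order $\le$. Concretely, suppose $\bar x + \bar y + \bar z = \bar x$ in $\brV$; I must show $\bar x + \bar y = \bar x$. Writing this out, the hypothesis says $\overline{x+y+z} = \bar x$, i.e. $x + y + z \equiv x$, which by definition of $\equiv$ means both $x + y + z \preceq x$ and $x \preceq x + y + z$. I want to conclude $x + y \equiv x$, i.e. $x + y \preceq x$ and $x \preceq x + y$.

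The second of these, $x \preceq x+y$, is immediate since $x + y = x + y$ witnesses $x \preceq x + y$ (take $z' = y$ in Definition~\ref{defub}). So the whole content is the first inequality: from $x + y + z \preceq x$ I must derive $x + y \preceq x$. Here I would use transitivity of $\preceq$ together with the fact that $\preceq$ is compatible with addition (both noted in the paragraph preceding the lemma): from $x \preceq x + z$ (witnessed by $z$) I get, adding $y$, that $x + y \preceq x + y + z$; and I already have $x + y + z \preceq x$ by hypothesis. Chaining these, $x + y \preceq x + y + z \preceq x$, hence $x + y \preceq x$. Combined with $x \preceq x + y$, this gives $x + y \equiv x$, i.e. $\bar x + \bar y = \bar x$, which is exactly the upper bound condition for $\brV$.

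There is really no main obstacle here — the point is simply that $\equiv$ was built precisely so that $\preceq$ becomes antisymmetric on the quotient, and once the relation is a genuine partial order, the implication "$x + y + z = x \Rightarrow x + y = x$" follows from $x \le x+y \le x+y+z = x$. The only thing to be careful about is that the computation happens at the level of representatives in $V$ and then descends: one checks the chain of $\preceq$-relations in $V$ using compatibility with $+$ and transitivity, and then reads the conclusion in $\brV$ via the definition $\bar x \le \bar y \iff x \preceq y$ together with the fact that $\le$ is a partial order (so $\bar a \le \bar b$ and $\bar b \le \bar a$ force $\bar a = \bar b$). I would present it in about four or five lines, essentially as the displayed chain $\bar x \le \bar x + \bar y \le \bar x + \bar y + \bar z = \bar x$, concluding $\bar x + \bar y = \bar x$ by antisymmetry of $\le$ on $\brV$.
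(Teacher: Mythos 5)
Your proof is correct and follows essentially the same route as the paper's: both reduce the claim to showing $x+y \preceq x$ (you via transitivity through $x+y \preceq x+y+z \preceq x$, the paper by exhibiting the explicit witness $z+z'$ with $x+y+(z+z')=x$) and then combine with the trivial $x \preceq x+y$ to get $x+y\equiv x$.
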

\begin{proof} Suppose $\bar x+\bar y+\bar z =\bar x$. Then $  x+  y+  z +z' = x$
for some $z'$, implying $  x+  y \le x$. But clearly $  x\le   x+
y,$ so $  x+y\equiv   x ,$ and  $\bar x+\bar y  =\bar x$.
\end{proof}

This construction respects other topological notions.

\begin{defn}\label{defub} A subset $S\subset V$ is \textbf{convex}
if, for any $s_i$ in $S$ and $v$ in $V$, $s_1 \preceq v  \preceq
s_2$ implies $v \in S.$
\end{defn}

\begin{lem}\label{conhul} The convex hull of a point $s \in S$ is its equivalence class in $\brV$.
\end{lem}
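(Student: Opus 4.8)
The plan is to identify the convex hull of $\{s\}$ with the equivalence class $\bar s = \{v \in V : v \equiv s\}$ by proving two things: that $\bar s$ is itself convex, and that $\bar s$ is contained in every convex subset of $V$ containing $s$. Since an arbitrary intersection of convex sets is again convex (if $s_1, s_2 \in \bigcap_\alpha C_\alpha$ and $s_1 \preceq v \preceq s_2$, then $v \in C_\alpha$ for every $\alpha$, hence $v \in \bigcap_\alpha C_\alpha$), the convex hull of $\{s\}$—the intersection of all convex subsets of $V$ containing $s$, which is nonempty since $V$ itself qualifies—is well defined, and these two facts together force it to equal $\bar s$.

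First I would check that $\bar s$ is convex. Take $s_1, s_2 \in \bar s$ and $v \in V$ with $s_1 \preceq v \preceq s_2$. From $s_1 \equiv s$ we get $s \preceq s_1$, so transitivity of $\preceq$ yields $s \preceq v$; from $s_2 \equiv s$ we get $s_2 \preceq s$, so transitivity yields $v \preceq s$. Hence $s \preceq v \preceq s$, i.e. $v \equiv s$, so $v \in \bar s$. Thus $\bar s$ is convex, and it contains $s$ by reflexivity of $\preceq$.

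Next I would show that $\bar s \subseteq C$ for any convex set $C$ with $s \in C$. Let $v \in \bar s$, so $s \preceq v$ and $v \preceq s$; taking the two endpoints to be $s, s \in C$, we have $s \preceq v \preceq s$, and convexity of $C$ forces $v \in C$. Hence $\bar s \subseteq C$. Combining the two steps, $\bar s$ is the smallest convex set containing $s$, which is exactly the assertion.

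I do not anticipate a genuine obstacle here; the only point that needs care is that the definition of convexity quantifies over \emph{two} points of the set, so in verifying that $\bar s$ is convex the ``endpoints'' $s_1, s_2$ must be allowed to range over all of $\bar s$, not merely over $\{s\}$—this is precisely what the first step handles, via transitivity of Green's preorder.
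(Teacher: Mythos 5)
Your proof is correct and follows essentially the same route as the paper: both reduce the claim to the observation that $v \equiv s$ if and only if $s \preceq v \preceq s$, via transitivity of Green's preorder. You are somewhat more thorough than the paper's two-line argument, since you also verify explicitly that the equivalence class is itself convex (with endpoints ranging over the whole class) and is contained in every convex set containing $s$, points the paper leaves implicit.
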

\begin{proof} $(\subset)$ If $s + y + z = s,$ then $s \preceq s+y \preceq s,$ implying $s \equiv s+y.$
\pSkip

$(\supset)$ If  $s \equiv s+y,$ then $s + y + z = s$ for some $z$,
implying $s  \preceq s+y \preceq s.$
\end{proof}

\begin{prop}\label{conv1} $S$ is convex in $V$ iff  $\brS$ is convex in $\brV$ and $S$ is a  union of
equivalence classes.
 \end{prop}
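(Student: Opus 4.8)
The plan is to prove both implications directly, the only real ingredient being the interplay between $\preceq$ on $V$ and the induced partial order $\le$ on $\brV$. Recall that $\le$ is well defined precisely because $x\preceq y$ is respected by $\equiv$, so for classes $\bar x,\bar y$ one has $\bar x\le\bar y$ if and only if $x\preceq y$ for \emph{every} choice of representatives. This ``representative-independence'' is the workhorse, and together with Lemma~\ref{conhul} it handles everything.

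For the forward direction, assume $S$ is convex in $V$. First, $S$ is a union of equivalence classes: if $s\in S$ and $v\equiv s$, then $s\preceq v\preceq s$, so $v\in S$ by convexity; equivalently, Lemma~\ref{conhul} says the convex hull of $s$ is its equivalence class, and a convex set contains the convex hull of each of its points. Next, to see $\brS$ is convex in $\brV$, take $\bar s_1,\bar s_2\in\brS$ and $\bar v\in\brV$ with $\bar s_1\le\bar v\le\bar s_2$. Since $S$ is a union of equivalence classes we may pick the representatives $s_1,s_2$ inside $S$; then $s_1\preceq v\preceq s_2$ for any representative $v$ of $\bar v$, and convexity of $S$ gives $v\in S$, hence $\bar v\in\brS$.

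Conversely, assume $\brS$ is convex in $\brV$ and $S$ is a union of equivalence classes. Given $s_1,s_2\in S$ and $v\in V$ with $s_1\preceq v\preceq s_2$, pass to classes to get $\bar s_1\le\bar v\le\bar s_2$ with $\bar s_1,\bar s_2\in\brS$. Convexity of $\brS$ yields $\bar v\in\brS$, so $v$ lies in the equivalence class of some element of $S$; as $S$ is a union of equivalence classes, $v\in S$. Thus $S$ is convex, completing the proof. The argument is essentially bookkeeping; the only step needing a moment's care is choosing the representatives $s_1,s_2$ of classes in $\brS$ to lie in $S$, which is exactly why the ``union of equivalence classes'' hypothesis is indispensable.
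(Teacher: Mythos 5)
Your proof is correct and follows essentially the same route as the paper, whose own proof is only the one-line hint ``take the convex hull and apply Lemma~\ref{conhul}'': you use that the convex hull of a point is its equivalence class to get the ``union of equivalence classes'' part, and the representative-independence of the induced order $\le$ on $\brV$ to transfer convexity back and forth. The only cosmetic remark is that in the forward direction you do not need the ``union of classes'' fact to choose $s_1,s_2\in S$ representing elements of $\brS$ --- that is immediate from the definition of $\brS$ as the image of $S$ --- but this does no harm.
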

\begin{proof}
Take the convex hull and apply Lemma~\ref{conhul}.
\end{proof}

This also ties in with the SA property.

\begin{lem}\label{conhul1} A subset $S$ containing $\vzero$  is convex in $V$,
iff $S$ is SA.
\end{lem}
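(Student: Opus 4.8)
The plan is to prove the equivalence $S \text{ convex} \iff S \text{ SA}$ for a subset $S \subseteq V$ containing $\vzero$, by checking both implications directly against the definitions, since all the geometric content has already been packaged into Lemma~\ref{conhul} and Proposition~\ref{conv1}.

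First I would prove that SA implies convex. Suppose $S$ is SA, and take $s_1, s_2 \in S$ and $v \in V$ with $s_1 \preceq v \preceq s_2$. From $v \preceq s_2$ we have $v + z = s_2 \in S$ for some $z \in V$; since $S$ is SA, this forces $v \in S$. Note that this direction does not even use $s_1$ or the hypothesis $\vzero \in S$.

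Next I would prove that convex implies SA, and this is where the hypothesis $\vzero \in S$ is essential (indeed without it the claim is false, e.g. $S = V \setminus \{\vzero\}$ in a suitable $V$). Assume $S$ is convex and $\vzero \in S$. Let $x, y \in V$ with $x + y \in S$. Then $\vzero \preceq x \preceq x + y$: the first relation holds because $\vzero + x = x$, and the second because $x + y = x + y$. Since $\vzero \in S$ and $x + y \in S$ and $S$ is convex, we conclude $x \in S$; symmetrically (using $x + y = y + x$) we get $y \in S$. Hence $S$ is SA.

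I do not expect a genuine obstacle here — the only subtlety is remembering to invoke $\vzero \in S$ as the lower endpoint in the convexity condition, which is precisely why that hypothesis appears in the statement. One could alternatively route the argument through Lemma~\ref{eqcond}, characterizing SA via ``$\sum a_i \in S \Rightarrow$ each $a_i \in S$,'' but the two-element formulation above is already immediate and self-contained, so I would keep the proof short.
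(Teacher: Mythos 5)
Your proof is correct and is essentially the same as the paper's: the $(\Leftarrow)$ direction writes $s_2 = v + z$ and applies SA, and the $(\Rightarrow)$ direction uses $\vzero \preceq x \preceq x+y$ with $\vzero$ as the lower endpoint. Your added remark that $\vzero \in S$ is needed only for the convex-implies-SA direction is accurate.
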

\begin{proof}
$(\Rightarrow)$ If $a+b \in S$ then $\vzero \preceq a  \preceq a+b$
implies $a \in S$, and likewise $b \in S.$
\pSkip

$(\Leftarrow)$ If $s _1 \preceq a \preceq s_2,$ then writing   $s_2
= a + z_2,$ we have $a \in S.$
\end{proof}

\begin{prop}  A submodule $S \subset V$ is SA in $V$, iff $S$ is a  union of
equivalence classes and $\brS $ is SA in $\brV$.
 \end{prop}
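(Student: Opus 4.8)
The plan is to deduce this from the characterizations already established, combining Lemma~\ref{conhul1} (which identifies the SA property with convexity for subsets containing $\vzero$) with Proposition~\ref{conv1} (which relates convexity in $V$ to convexity of the image in $\brV$ together with being a union of equivalence classes). Since a submodule $S$ automatically contains $\vzero$, Lemma~\ref{conhul1} tells us that $S$ is SA in $V$ if and only if $S$ is convex in $V$. Applying Proposition~\ref{conv1}, this holds if and only if $\brS$ is convex in $\brV$ and $S$ is a union of equivalence classes. It then remains to replace ``$\brS$ is convex in $\brV$'' by ``$\brS$ is SA in $\brV$''.

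For that last replacement I would again invoke Lemma~\ref{conhul1}, this time applied inside the monoid $\brV$: since $\brS$ contains $\bar\vzero$, it is convex in $\brV$ if and only if it is SA in $\brV$. One small point to check is that $\brS$ really is a submodule (or at least a submonoid) of $\brV$, so that Lemma~\ref{conhul1} applies to it; this is immediate because $\equiv$ is a congruence, so the image of a submodule under the quotient map $V \to \brV$ is again a submodule. Concatenating the two applications of Lemma~\ref{conhul1} with Proposition~\ref{conv1} yields the stated equivalence.

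I do not anticipate a genuine obstacle here; the proposition is essentially a formal consequence of the three preceding results, and the only care needed is bookkeeping — making sure ``union of equivalence classes'' is carried along correctly through Proposition~\ref{conv1} and that the hypothesis ``contains $\vzero$'' of Lemma~\ref{conhul1} is verified in both $V$ and $\brV$. Accordingly the proof is short:

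\begin{proof}
Since $S$ is a submodule, $\vzero \in S$, so by Lemma~\ref{conhul1} $S$ is SA in $V$ iff $S$ is convex in $V$. By Proposition~\ref{conv1} this holds iff $\brS$ is convex in $\brV$ and $S$ is a union of equivalence classes. As $\equiv$ is a congruence, $\brS$ is a submodule of $\brV$ containing $\bar\vzero$, so another application of Lemma~\ref{conhul1} (now in $\brV$) shows that $\brS$ is convex in $\brV$ iff $\brS$ is SA in $\brV$. Combining these gives the assertion.
\end{proof}
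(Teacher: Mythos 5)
Your argument is correct and is exactly the paper's proof: the paper likewise derives the statement by combining Lemma~\ref{conhul1} (applied to $S$ in $V$ and to $\brS$ in $\brV$) with Proposition~\ref{conv1}. Your additional remark that $\brS$ contains $\bar\vzero$ so the lemma applies in $\brV$ is just the bookkeeping the paper leaves implicit.
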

\begin{proof} This follows from Proposition~\ref{conv1} and
Lemma~\ref{conhul1}, applied to $S$ and $\brS $.
\end{proof}

Some concluding observations:
\begin{rem}
$ $
\begin{enumerate} \eroman \dispace
  \item  If $R$ is a semiring, then (taking $V = R$) the equivalence $\equiv$
  also respects multiplication, so $R/\equiv$ is a  ub~semiring.

 \item  If $V$ is an $R$-module, then $\brV$ is an  $\brR$-module,
 where scalar multiplication is given
 by $\bar a \bar v =
 \overline{av}.$

  \item Any decomposition $V = W_1 \oplus W_2$ induces a decomposition
  $ \brV =  \brW_1 \oplus  \brW_2.$
  \end{enumerate}
\end{rem}

\end{document}